\theoremstyle{plain}
\newtheorem{theorem}{Theorem}[section]
\newtheorem*{theorem*}{Theorem}
\newtheorem{proposition}[theorem]{Proposition}
\newtheorem*{proposition*}{Proposition}
\newtheorem*{conjecture*}{Conjecture}
\theoremstyle{definition}
\newtheorem{definition}[theorem]{Definition}
\theoremstyle{remark}
\newtheorem{remark}[theorem]{Remark}
\DeclareMathOperator{\supp}{supp}
\newcommand{\abs}[1]{\left\lvert #1 \right\rvert}
\newcommand{\norm}[1]{\left\lVert #1 \right\rVert}
\newcommand{\R}{\mathbb{R}}
\newcommand{\C}{\mathbb{C}}
\newcommand{\Z}{\mathbb{Z}}
\newcommand{\N}{\mathbb{N}}
\renewcommand{\eqref}[1]{\textnormal{(\ref{#1})}}
\numberwithin{equation}{section}
\newcommand{\polygon}{\Omega}
\newcommand{\cone}{{\mathfrak C}}
\title[Transmission eigenfunctions vanishing around corners]{On vanishing
  near corners of transmission eigenfunctions}
\author{Eemeli Bl{\aa}sten}
\address{Jockey Club Institute for Advanced Study, Hong Kong
  University of Science and Technology, Hong Kong SAR.}
\email{iaseemeli@ust.hk}
\author{Hongyu Liu}
\address{Department of Mathematics, Hong Kong Baptist University, Kowloon Tong, Hong Kong SAR.\vspace*{-4mm}}
\address{\vspace*{-4mm}and}
\address{HKBU Institute of Research and Continuing Education, Virtual University Park, Shenzhen, P. R. China.}
\email{hongyu.liuip@gmail.com}
\begin{document}

\begin{abstract}
Let $\Omega$ be a bounded domain in $\mathbb{R}^n$, $n\geq 2$, and $V\in L^\infty(\Omega)$ be a potential function. Consider the following transmission eigenvalue problem for nontrivial $v, w\in L^2(\Omega)$ and $k\in\mathbb{R}_+$,
\[
\begin{cases}
  (\Delta+k^2)v= 0 & \hspace*{-1.25cm}\mbox{in}\ \ \polygon, \\
  (\Delta+k^2(1+V))w= 0 & \hspace*{-1.25cm} \mbox{in}\ \ \polygon, \\
  w-v \in H^2_0(\polygon), \quad \norm{v}_{L^2(\polygon)}=1. &
\end{cases}
\]
We show that the transmission eigenfunctions $v$ and $w$ carry the geometric information of $\mathrm{supp}(V)$. Indeed, it is proved that $v$ and $w$ vanish near a corner point on $\partial \Omega$ in a generic situation where the corner possesses an interior angle less than $\pi$ and the potential function $V$ does not vanish at the corner point. This is the first quantitative result concerning the intrinsic property of transmission eigenfunctions and enriches the classical spectral theory for Dirichlet/Neumann Laplacian. We also discuss its implications to inverse scattering theory and invisibility. 

  \medskip 
  \noindent{\bf Keywords}: spectral; interior transmission eigenfunction; corner; vanishing and localizing; non-scattering

  \medskip
  \noindent{\bf Mathematics Subject Classification (2010)}: 35P25, 58J50, 35R30, 81V80
  
\end{abstract}

\maketitle

\section{Introduction}\label{sec:Intro}

Let $\Omega$ be a bounded domain in $\mathbb{R}^n$, $n\geq 2$, and $V\in L^\infty(\Omega)$ be a potential function. Consider the following (interior) transmission eigenvalue problem for $v, w\in L^2(\Omega)$,
\begin{equation}\label{eq:ite1}
\begin{cases}
  (\Delta+k^2)v= 0 & \hspace*{-1.25cm}\mbox{in}\ \ \polygon, \\
  (\Delta+k^2(1+V))w= 0 & \hspace*{-1.25cm} \mbox{in}\ \ \polygon, \\
  w-v \in H^2_0(\polygon), \quad \norm{v}_{L^2(\polygon)}=1. &
\end{cases}
\end{equation}
If the system \eqref{eq:ite1} admits a pair of nontrivial solutions
$(v, w)$, then $k$ is referred to as an \emph{(interior) transmission
  eigenvalue} and $(v,w)$ is the corresponding pair of
\emph{(interior) transmission eigenfunctions}. Note in particular that
nothing is imposed a-priori on the boundary values of $v$ or $w$
individually. In this paper, we are mainly interested in the real
eigenvalues, $k\in\mathbb{R}_+$, which are physically relevant.  The
study of the transmission eigenvalue problem has a long history and is
of significant importance in scattering theory.  The transmission
eigenvalue problem is a type of non elliptic and non self-adjoint
problem, so its study is mathematically interesting and
challenging. In the literature, the existing results are mainly
concerned with the spectral properties of the transmission
eigenvalues, including the existence, discreteness and infiniteness,
and Weyl laws; see for example \cite{CKP, RS, PS, CGH, Robbiano, LV,
  BP} and the recent survey \cite{CH2013inBook}. There are few results
concerning the intrinsic properties of the transmission
eigenfunctions. Here we are aware that the completeness of the set of
\emph{generalized transmission eigenfunctions} in $L^2$ is proven in
\cite{BP, Robbiano}.

In this paper, we are concerned with the vanishing properties of
\emph{interior transmission eigenfunctions}. It is shown that in
admissible geometric situations, transmission eigenfunctions which can
be approximated suitably by Herglotz waves will vanish at corners of
the support of the potential $V$. To our best knowledge, this is the
first quantitative result on intrinsic properties of transmission
eigenfunctions. As expected, these carry geometric information of the
support of the underlying potential $V$ as well as other interesting
consequences and implications in scattering theory, which we shall
discuss in more details in Section~\ref{sect:discussion}.

The location of vanishing of eigenfunctions is an important area of
study in the classical spectral theory for the Dirichlet/Neumann
Laplacian. Two important topics are the \emph{nodal sets} and
\emph{eigenfunction localization}. The former is the set of points in
the domain where the eigenfunction vanishes. For the latter, an
eigenfunction is said to be \emph{localized} if most of its
$L^2$-energy is contained in a subdomain which is a fraction of the
total domain. Considerable effort has been spent on the nodal sets and
localization in the classical spectral theory.  We refer to the recent
survey \cite{Grebenkov-Nguyen-2013}. For the curious, we mention
briefly basic facts about them, all of which are completely open for
transmission eigenfunctions. Nodal sets are $C^\infty$-curves whose
intersections form equal angles. By the celebrated Courant's nodal
line theorem, the nodal set of the $m$-th eigenfunction divides the
domain into at most $m$ nodal domains. Localization seems to be a more
recent topic even though some examples have been known for a long
time. A such example is the \emph{whispering gallery modes} that comes
from Lord Rayleigh's study of whispering waves in the Saint Paul
Cathedral in London during the late 19th century. These eigenfunctions
concentrate their energy near the boundary of a spherical or
elliptical domain. Other well known localized modes are called
\emph{bouncing ball modes} and \emph{focusing modes}
\cite{Keller-Rubinov, Chen-Morris-Zhou}. It is worth noting that the
Laplacian does not possess localized eigenfunctions on rectangular or
equilateral triangular domains \cite{Nguyen}. However, localization
does appear for the classical eigenvalue problem in a certain sense
when the angle is reflex \cite{Nazarov}. We also refer to
\cite{Heilman--Strichartz} for more relevant examples.

In our case of the transmission eigenvalue problem, peculiar and
intriguing phenomena are observed in that both vanishing and
localization of transmission eigenfunctions may occur near corners of
the support of the potential. Indeed, in an upcoming numerical paper
\cite{BLLW}, we show that if the interior angle of a corner is less
than $\pi$, then the transmission eigenfunctions vanish near the
corner, whereas if the interior angle is bigger than $\pi$, then the
transmission eigenfunctions localize near the corner. In this paper,
we shall rigorously justify the vanishing property of the transmission
eigenfunction in a certain generic situation. It turns out to be a
highly technical matter. In fact, even in the classical spectral
theory, the intrinsic properties of the eigenfunctions are much more
difficult to study than those of the eigenvalues, and they remain a
fascinating topic for a lot of ongoing research. Nevertheless, we
would also like to mention that with the help of highly accurate
computational methods, we can present a more detailed numerical
investigation in \cite{BLLW} including the vanishing/localizing order
as well as its relationship to the angle of the corner.

We believe that the vanishing and localizing properties of
transmission eigenfunctions are closely related to the analytic
continuation of the eigenfunctions. Indeed in the recent papers
\cite{BPS, PSV, HSV, EH1, EH2}, it is shown that transmission
eigenfunctions cannot be extended analytically to a neighbourhood of a
corner. The failure of the analytic continuation of transmission
eigenfunctions can be used via an absurdity argument in \cite{HSV} to
show the uniqueness in determining the polyhedral support of an
inhomogeneous medium by a single far-field pattern in the inverse
scattering theory. By further quantifying the aforementioned analytic
continuation property of transmission eigenfunctions, sharp stability
estimates were established in \cite{BL2016} in determining the
polyhedral support of an inhomogeneous medium by a single far-field
pattern. Those uniqueness and stability results already indicate that
the intrinsic properties of transmission eigenfunctions carry
geometric information of the underlying potential function
$V$. Furthermore in \cite{BL2016}, as an interesting consequence of
the quantitative estimates involved, a sharp lower bound can be
derived for the far-field patterns of the waves scattered from
polyhedral potentials associated with incident plane waves. In this
paper, we can significantly extend this result by establishing a
similar quantitative lower bound associated with incident Herglotz
waves. On the other hand, it is known \cite{Ca} that the scattered
waves created by incident waves that are Herglotz approximations to
transmission eigenfunctions will have an arbitrarily small far-field
energy. This critical observation apparently indicates that the
transmission eigenfunctions must vanish near the corner point. We
shall give more relevant discussion of our results in
Section~\ref{sect:discussion}, connecting our study to inverse
scattering problems and invisibility cloaking.

The rest of the paper is organized as follows. We will recall
scattering theory and define notation in Section
\ref{sect:notation}. All of the background and admissibility
assumptions are contained therein. We state our main results
mathematically in Section \ref{sect:results}, and then proceed to
prove them in Section \ref{sect:ffbound} and Section
\ref{sect:vanishing} using results from Section
\ref{sect:auxResults}.

\section{Preliminaries} \label{sect:notation}

In this section we recall background theory, lay some definitions and
fix notation. We will start by describing acoustic scattering theory
for penetrable scatterers. This will be referred to as ``background
assumptions'' in theorems. After that we recall what is the interior
transmission problem and some of its known facts. Finally we define
which potentials are admissible for our theorems.

\subsection{Background assumptions} \label{subsect:background}

Whenever we say that ``let the background assumptions hold'' we mean
that everything in this section should hold, unless stated
otherwise. We will recall the fundamentals of acoustic scattering
theory. For more details in the three dimensional case we refer the
readers to \cite{CK}.

We will consider only scatterers of finite diameter that are contained
in a large origin-centered ball, the \emph{domain of interest},
\[
B_R = B(\bar0,R) = \{ x\in\R^n \mid \abs{x}<R \}
\]
where $R>1$ is fixed. Let $V \in L^\infty(B_R)$ be a bounded potential
function representing the medium parameter of the scatterer. We shall
consider scattering of a fixed frequency by fixing the wavenumber
$k\in\mathbb{R}_+$.

The scatterer $V$ is illuminated by an incident wave, which in this
paper is chosen to be any Herglotz wave. These are superpositions of
plane-waves that can be written as
\begin{equation}\label{eq:herg1}
u^i(x) = \int_{\mathbb{S}^{n-1}} e^{ik\theta\cdot x} g(\theta)
d\sigma(\theta)
\end{equation}
where the kernel $g\in L^2(\mathbb{S}^{n-1})$. We say that $u^i$ is
\emph{normalized} if $\norm{g}_{L^2(\mathbb{S}^{n-1})}=1$. The field
$u^i$ is called \emph{incident} because it satisfies the equation
\[
(\Delta+k^2)u^i=0
\]
which corresponds to a background unperturbed by the presence of $V$.

Unless $V$ is transparent to $u^i$, the illumination of $V$ by $u^i$
creates a unique scattered wave $u^s\in H^2_\text{loc}(\R^n)$ such that
\begin{equation}\label{eq:scattering2}
\begin{split}
  (\Delta + k^2(1+V))u = 0& \ \ \mbox{in}\ \ \R^n,\\ u = u^i +
  u^s,&\\ \lim_{r\to\infty} r^\frac{n-1}{2} (\partial_r u^s - i k u^s)
  = 0.
\end{split}
\end{equation}
Here $u$ is the \emph{total field} which, as a superposition of the
incident field and scattered field, represents the physical observable
field. The third condition, where $r=\abs{x}$, says that $u^s$
satisfies the \emph{Sommerfeld radiation condition}, which can be
interpreted as having $u^s$ propagating from $V$ to infinity instead
of the other way around.

A property of the scattered field is that as one zooms out, the
potential $V$ starts to look more and more like a point-source in a
sense. This means that far away, $u^s$ looks like the Green's function
to $\Delta+k^2$ but modulated by a \emph{far-field pattern}
$u^s_\infty$. More precisely, as $\abs{x}\to\infty$, $u$ has the
expansion
\[
u(x) = u^i(x) + \frac{e^{ik\abs{x}}}{\abs{x}^{(n-1)/2}}
u^s_\infty\left(\frac{x}{\abs{x}}; u^i\right) + \mathcal{O}\left(
\frac{1}{\abs{x}^{n/2}} \right)
\]
where for a fixed $u^i$ the far-field pattern is a real-analytic map
$u^s_\infty:\mathbb{S}^{n-1} \to \C$ (it is also called the
\emph{scattering amplitude}).

\subsection{The interior transmission problem} \label{subsect:ITP}
Direct scattering theory is all about the study of the map $(u^i, V)
\mapsto u^s_\infty$. Given a potential $V$ the \emph{far-field
  operator}\footnote{Also called the \emph{relative scattering
    operator}. The unitary \emph{scattering operator} is the identity
  plus the former.} maps the Herglotz kernel $g$ of $u^i$ to the
far-field pattern $u^s_\infty$. In inverse scattering one is
interested in recovering meaningful information about the scatterer
$V$ from full or partial information of the far-field operator.

A number of algorithms in inverse scattering, such as linear sampling
\cite{Colton-Kirsch} and factorization methods \cite{Kirsch-Grinberg}
fail at wavenumbers where the far-field operator has non-trivial
kernel. In such a case there is an incident wave $u^i$ for which $V$
does not cause a detectable change in the far-field, and thus by
Rellich's lemma and unique continuation $u^i$ does not scatter at all:
$\supp u^s \subset \overline{\Omega}$. If this happens we call $k$ a
\emph{non-scattering energy} (or wavenumber) and say that $V$ is
transparent to $u^i$, or that $u^i$ is non-scattering. It is known
that there are radially symmetric potentials which are transparent to
certain incident waves \cite{GH}.

If $u^i$ is non-scattering and we restrict it to the supporting set
$\Omega$, then the following \emph{interior transmission problem} has
a non-trivial solution $(v,w) \in L^2(\Omega) \times L^2(\Omega)$
\begin{align}
  (\Delta+k^2)v &= 0 \quad \mbox{in}\ \ \Omega, \label{ITP1}\\
  (\Delta+k^2(1+V))w &= 0 \quad \mbox{in}\ \ \Omega, \label{ITP2}\\
  w-v &\in H^2_0(\Omega), \label{ITP3}
\end{align}
namely $v = u^i_{|\Omega}$ and $w = u_{|\Omega}$. When this
non-elliptic, non self-adjoint eigenvalue problem has a solution, we
call $k$ a transmission eigenvalue. The functions
$v$ and $w$ are referred to as the transmission eigenfunctions.

If $V$ is radially symmetric, then $v$ in \eqref{ITP1} extends to the
whole $\R^n$ as a Herglotz function, and hence in this case
transmission eigenvalues and non-scattering energies coincide
\cite{CM}. This observation hinted for a long time that these sets of
numbers coincide in general. However it was a red herring: a series of
papers on corner scattering \cite{BPS, PSV, EH1, EH2} showed that in
the presence of a certain type of corner or edge singularity in the
potential $V$ there are no non-scattering energies despite the
well-known fact that such a scatterer always has an infinite discrete
set of transmission eigenvalues.

We remark that the problem \eqref{ITP1}--\eqref{ITP3} has been studied
heavily \cite{CH2013inBook}. Many properties of
the transmission eigenvalues are known. Despite this almost nothing is
known about the eigenfunctions themselves before this paper.

\subsection{Herglotz approximation}

We introduced the Herglotz wave function in \eqref{eq:herg1}, which shall be used to approximate the 
transmission eigenfunction $v$ satisfying \eqref{ITP1}. We briefly recall the following result concerning the Herglotz approximation for the subsequent use.

\begin{theorem}[Theorem 2 in \cite{Weck}]\label{thm:herg1}
Let $\mathbf{W}_k$ denote the space of all Herglotz wave functions of
the form \eqref{eq:herg1}. For $\Omega\subset\R^n$ a $C^0$-domain,
define
\[
\mathbf{U}_k(\Omega):=\{u\in C^\infty(\Omega); (\Delta+k^2) u=0\},
\]
and
\[
\mathbf{W}_k(\Omega):=\{u_{|\Omega}; u\in \mathbf{W}_k\}.
\]
Then $\mathbf{W}_k(\Omega)$ is dense in $\mathbf{U}_k(\Omega)\cap
L^2(\Omega)$ with respect to the topology induced by the $L^2$-norm.
\end{theorem}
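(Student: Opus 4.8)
The plan is to argue by duality and then reduce to a Rellich-type uniqueness statement in the exterior of $\Omega$. Since $\Omega$ is bounded, every Herglotz wave restricts to an element of $L^2(\Omega)$, so $\mathbf W_k(\Omega)$ is a subspace of the Hilbert space $L^2(\Omega)$, and by Hahn--Banach its $L^2$-closure contains $\mathbf U_k(\Omega)\cap L^2(\Omega)$ as soon as we establish the following: \emph{if} $\phi\in L^2(\Omega)$ satisfies $\int_\Omega\phi\,\overline v\,dx=0$ for every $v\in\mathbf W_k(\Omega)$, \emph{then} $\int_\Omega\phi\,\overline u\,dx=0$ for every $u\in\mathbf U_k(\Omega)\cap L^2(\Omega)$. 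So I would fix such a $\phi$ and extend it by zero to all of $\R^n$ (keeping the name $\phi$); then $\widehat\phi\in L^2(\R^n)$ since $\Omega$ is bounded, and by Paley--Wiener $\widehat\phi$ is the restriction of an entire function, hence real-analytic on $\R^n$. Substituting the representation \eqref{eq:herg1} and using Fubini, the hypothesis reads $\int_{\mathbb S^{n-1}}\overline{g(\theta)}\,\widehat\phi(k\theta)\,d\sigma(\theta)=0$ for every $g\in L^2(\mathbb S^{n-1})$, so by continuity $\widehat\phi\equiv0$ on the characteristic sphere $\{\abs{\xi}=k\}$.

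The core of the argument is to manufacture from $\phi$ a particular solution of $(\Delta+k^2)\psi=\phi$ on $\R^n$ that, crucially, \emph{vanishes outside $\overline\Omega$}. I would set $\psi:=\mathcal F^{-1}\!\bigl[\widehat\phi(\xi)/(k^2-\abs{\xi}^2)\bigr]$. The quotient is well behaved: near $\{\abs{\xi}=k\}$ the numerator is smooth and vanishes on that regular hypersurface, so the division lemma makes $\widehat\phi(\xi)/(k^2-\abs{\xi}^2)$ extend smoothly there, while for $\abs{\xi}$ bounded away from $k$ the denominator dominates $1+\abs{\xi}^2$, so $\widehat\phi(\xi)/(k^2-\abs{\xi}^2)$ and $\abs{\xi}^2\,\widehat\phi(\xi)/(k^2-\abs{\xi}^2)$ both lie in $L^2(\R^n)$. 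Hence $\psi\in H^2(\R^n)$ and $(\Delta+k^2)\psi=\phi$. In $\R^n\setminus\overline\Omega$ we have $\phi=0$, so $\psi$ solves the homogeneous Helmholtz equation there and is square integrable; finiteness of $\int_{\R^n\setminus\overline\Omega}\abs{\psi}^2$ forces $\liminf_{R\to\infty}\int_{\abs{x}=R}\abs{\psi}^2\,dS=0$, so Rellich's lemma yields $\psi\equiv0$ in the unbounded component of $\R^n\setminus\overline\Omega$, and together with unique continuation (here the structure of $\Omega$, whose exterior is connected, is used) $\psi\equiv0$ throughout $\R^n\setminus\overline\Omega$. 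Thus $\psi\in H^2(\R^n)$ is supported in $\overline\Omega$, and since $\partial\Omega$ is regular enough, $\psi\in H^2_0(\Omega)$.

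To conclude, I would pick $\psi_j\in C_c^\infty(\Omega)$ with $\psi_j\to\psi$ in $H^2(\Omega)$. Because $k$ is real, any $u\in\mathbf U_k(\Omega)$ satisfies $(\Delta+k^2)\overline u=0$ in $\Omega$ in the distributional sense, so $\int_\Omega\overline u\,(\Delta+k^2)\psi_j\,dx=0$ for each $j$; letting $j\to\infty$ and using $(\Delta+k^2)\psi_j\to\phi$ in $L^2(\Omega)$ gives $\int_\Omega\overline u\,\phi\,dx=0$, i.e.\ $\int_\Omega\phi\,\overline u\,dx=0$, which is the desired duality statement. I expect the main obstacle to be this middle construction: extracting genuine $H^2(\R^n)$-regularity of $\psi$ --- indispensable for the final distributional pairing --- from the sole information that $\widehat\phi$ vanishes on the \emph{entire} sphere $\{\abs{\xi}=k\}$, and then upgrading the $L^2$-decay of $\psi$ at infinity to its vanishing in all of the exterior by means of Rellich's lemma and unique continuation; the boundary regularity of $\Omega$ enters only to guarantee $\psi\in H^2_0(\Omega)$.
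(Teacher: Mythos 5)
The paper itself offers no proof of this statement: it is quoted as Theorem~2 of \cite{Weck}, so there is nothing in-text to compare against. Your duality scheme --- orthogonality of $\phi$ to all Herglotz waves forces $\widehat\phi$ to vanish on the characteristic sphere $\{\abs{\xi}=k\}$, division by the symbol produces $\psi\in H^2(\R^n)$ with $(\Delta+k^2)\psi=\phi$, Rellich plus unique continuation annihilate $\psi$ outside $\overline\Omega$, and $\psi\in H^2_0(\Omega)$ lets you integrate by parts against any $L^2$ solution --- is the standard route to this Runge-type density result, and its architecture is sound.

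Two points need repair. First, your argument genuinely requires $\R^n\setminus\overline\Omega$ to be connected (you say as much in passing), and this hypothesis is not cosmetic: for the annulus $\Omega=\{1<\abs{x}<2\}\subset\R^2$ the function $Y_0(k\abs{x})$ belongs to $\mathbf{U}_k(\Omega)\cap L^2(\Omega)$, yet the $L^2(\Omega)$-closure of $\mathbf{W}_k(\Omega)$ is the closed span of $\{J_m(kr)e^{im\theta}\}_{m\in\Z}$, from which $Y_0(kr)$ has positive distance (only the $m=0$ angular mode can contribute, and $Y_0,J_0$ are linearly independent on $(1,2)$). So the statement for a bare $C^0$-domain is false without connected complement; you should make that hypothesis explicit. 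It holds for the convex polygons and cuboids actually used in the paper, so nothing downstream is affected. Second, your passage to Rellich through ``finiteness of $\int_{\R^n\setminus\overline\Omega}\abs{\psi}^2$ forces $\liminf_{R\to\infty}\int_{\abs{x}=R}\abs{\psi}^2\,dS=0$'' appeals to a false form of Rellich's lemma: $j_0(k\abs{x})=\sin(k\abs{x})/(k\abs{x})$ solves the Helmholtz equation in $\abs{x}>1$ and has $\liminf_{R\to\infty}\int_{\abs{x}=R}\abs{j_0}^2\,dS=0$ without vanishing. You should instead invoke Rellich's $L^2$-uniqueness theorem directly --- a nontrivial exterior solution satisfies $\liminf_{R\to\infty}R^{-1}\int_{R_0<\abs{x}<R}\abs{u}^2\,dx>0$ --- whose hypothesis is exactly the square-integrability you already have, so the conclusion survives once the intermediate step is replaced. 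The remaining ingredients (the division lemma across the regular zero set of $k^2-\abs{\xi}^2$, and the inclusion of $H^2(\R^n)$-functions supported in $\overline\Omega$ into $H^2_0(\Omega)$ for $C^0$ boundaries via the segment property) are standard and correctly deployed.
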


\subsection{Admissible potentials} \label{subsect:admissibility}
As part of our proof of the vanishing of transmission eigenfunctions
at corners we will show lower bounds for the far-field pattern
$u^s$. That is, we shall consider the scattering from a corner and
make use of the corner singularity in the potential. To save
notational burden we collect these a-priori assumptions in this
section.

We shall only consider polygonal or hypercuboidal scatterers $V$ for
simplicity. In essence $V$ will be defined as a H\"older-continuous
function $\varphi$ restricted to a polygonal domain $\polygon$; see
below. As the arguments are local, the results will hold qualitatively
for any potential $V$ for which $V_{|\mathcal U} = \chi_\polygon
\varphi_{|\mathcal U}$ for some open set $\mathcal U$ and such that
there is a reasonable path from $\mathcal U$ to infinity.

\begin{definition}
  Recalling the notation $B_R$ from Section \ref{subsect:background},
  we say that the potential $V$ is (qualitatively) admissible if
  \begin{enumerate}
    \item $V = \chi_\polygon \varphi$, where $\chi_\polygon(x)=1$ if
      $x\in\polygon$ and $\chi_\polygon(x)=0$ otherwise;
    \item $\polygon \subset B_R$ is an open convex polygon in 2D or a
      cuboid in higher dimensions;
    \item $\varphi \in C^\alpha(\R^n)$ for some $\alpha > 0$ in 2D and
      $\alpha > 1/4$ in higher dimensions;
    \item $\varphi \neq 0$ at some vertex of $\polygon$.
  \end{enumerate}
\end{definition}

\subsection{Function order} \label{subsect:order}
An important concept in corner scattering is the so-called
\emph{function order}. This determines how flat the function is at a
certain point, or in other words what is the order of the first
non-trivial homogeneous polynomial in its Taylor expansion at that
point.

\begin{definition}
  Let $f$ be a complex-valued function defined in an open
  neighbourhood of $x_c \in \R^n$. We say that $f$
  \emph{has order $N$ at $x_c$} if
  \[
  N = \max\{ M\in\Z \mid \exists C<\infty: \abs{f(x)} \leq
  C\abs{x-x_c}^M \text{ near } x_c\}.
  \]
  If the set is unbounded from above we say that $f$ has order
  $\infty$. If the set is empty $f$ has order $-\infty$.
\end{definition}

\begin{remark}
  If $f$ is smooth then it has order $N<\infty$ at $x_c$ if and only
  if $\partial^\alpha f(x_c) = 0$ for $\alpha\in\N^n$,
  $\abs{\alpha}<N$ and $\partial^\beta f(x_c) \neq 0$ for some
  $\beta\in\N^n$, $\abs{\beta}=N$. When $N=\infty$ the second
  condition is ignored: there are smooth functions vanishing to
  infinite order e.g. $\exp(-1/\abs{x}^2)$. Smooth functions always
  have non-negative order.
\end{remark}

\section{Statement of the main results} \label{sect:results}

\begin{theorem} \label{lowerBoundThm}
  Let $n \in \{2,3\}$ and let the background assumptions hold. If $V$
  is qualitatively admissible with $\varphi(x_c)\neq0$ at a vertex
  $x_c$ of $\polygon$, and $\mathcal{N}\in\N$, then there is
  $c,\ell<\infty$ depending on $V,n,k,\mathcal{N}$ and
  $\mathcal{S}=\mathcal{S}(V,k) \geq 1$ such that
  \begin{equation} \label{FFlowerBound}
    \norm{u^s_{\infty}}_{L^2(\mathbb S^{n-1})} \geq
    \frac{\mathcal{S}}{\exp\exp \big(
      c\min(1,\norm{P_N})^{-\ell}\big)}
  \end{equation}
  for any normalized incident Herglotz wave $u^i$ which is of order $N
  \leq \mathcal{N}$ at $x_c$ and whose Taylor expansion there begins
  with $P_N$. Here $\norm{P_N} = \int_{\mathbb{S}^{n-1}}
  \abs{P_N(\theta)} d\sigma(\theta)$.
\end{theorem}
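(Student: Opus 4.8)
The strategy is a contradiction-by-contrapositive argument of the type used in the corner-scattering literature (\cite{BPS, PSV, EH1, EH2, BL2016}), made quantitative. The plan is as follows. Suppose $u^i$ is a normalized Herglotz wave of order $N\le\mathcal N$ at the vertex $x_c$, with leading Taylor polynomial $P_N$, and suppose the scattered far-field is small, say $\norm{u^s_\infty}_{L^2(\mathbb S^{n-1})}=\varepsilon$. By Rellich's lemma and elliptic estimates, smallness of $u^s_\infty$ forces smallness of $u^s$ (hence of $w-u^i$) in a fixed neighbourhood of $x_c$, with a bound of the form $\norm{u^s}_{H^2}\lesssim \varepsilon$ possibly after losing a factor controlled by $\mathcal S=\mathcal S(V,k)$. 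Thus we are in an approximate version of the interior transmission situation: $(\Delta+k^2)u^i=0$, $(\Delta+k^2(1+V))w=0$ near $x_c$, and $w-u^i$ is small in $H^2$ near the corner. The first key step is therefore to set up this reduction carefully, tracking all constants in terms of $V,n,k$.

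The second and main step is the \emph{corner/complex-geometric-optics} estimate. One inserts a CGO solution $u_0(x)=\rme^{\rho\cdot(x-x_c)}$ with $\rho\in\C^n$, $\rho\cdot\rho=0$, $\abs{\rho}=\tau$ large, concentrated so that $\rme^{\rho\cdot(x-x_c)}$ decays exponentially away from $x_c$ inside the cone $\mathfrak C$ opened by the vertex. Multiplying the difference of the two Helmholtz equations by $u_0$ and integrating over a small sector $\polygon\cap B_\delta(x_c)$, Green's identity produces a dominant term $k^2\int_{\polygon\cap B_\delta} \varphi\, w\, u_0$ plus boundary terms on $\d B_\delta$ (exponentially small in $\tau$) and terms involving $w-u^i$ (small, of size $\varepsilon$). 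Writing $w= u^i+(w-u^i)$ and $u^i = P_N + (\text{higher order})$, the main term splits into $k^2\varphi(x_c)\int_{\mathfrak C} P_N(x-x_c)\,\rme^{\rho\cdot(x-x_c)}\,\rmd x$ plus remainders controlled by the H\"older exponent $\alpha$ and by $\varepsilon$. The crucial computation is that this leading integral, after scaling $x-x_c = \tau^{-1}y$, equals $\tau^{-(n+N)}$ times a nonzero constant determined by $\int_{\mathfrak C} P_N(y)\rme^{\hat\rho\cdot y}\,\rmd y$, which one shows is bounded below in absolute value by $c_0\norm{P_N}$ after averaging over admissible directions $\hat\rho$ (here the convexity of $\polygon$, i.e.\ interior angle $<\pi$, guarantees the cone is proper so the integral converges and does not degenerate). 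Balancing the exponent $\tau$ — the remainder terms scale like $\tau^{-(n+N)-\alpha}$, like $\varepsilon\,\rme^{c'\tau\delta}$-type contributions, and like $\rme^{-c''\tau\delta}$ from $\d B_\delta$ — against the main term $\sim \tau^{-(n+N)}\norm{P_N}$ forces an inequality of the shape $\norm{P_N}\lesssim \varepsilon^{\mu}$ for some $\mu>0$ with a double-exponential loss coming from optimizing $\tau$ and $\delta$; inverting this yields exactly the stated lower bound \eqref{FFlowerBound} with the $\exp\exp$ dependence on $\min(1,\norm{P_N})^{-\ell}$.

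The third step is bookkeeping: one must verify the dependence of $c,\ell,\mathcal S$ on $V,n,k,\mathcal N$ is as claimed — in particular $\mathcal S$ should absorb the Rellich/elliptic constants and the quantity $\abs{\varphi(x_c)}^{-1}$, and the restriction $\alpha>1/4$ in dimension $3$ enters precisely to make the H\"older remainder beat the main term after the $\tau$-optimization (this is where the higher-dimensional case is more delicate than $n=2$). The main obstacle I anticipate is exactly this $\tau$-$\delta$ optimization with three competing error terms of different asymptotic character: getting a clean, explicit lower bound rather than a mere non-vanishing statement requires choosing $\tau$ as a function of $\norm{P_N}$ (roughly $\tau\sim \norm{P_N}^{-\ell}$ up to logs) and then $\delta$ as a further function of $\tau$, and controlling the Herglotz norm $\norm{g}_{L^2}=1$ in terms of local data near $x_c$ — the passage from the global normalization of $u^i$ to the local size of $P_N$ and of the higher-order tail on $B_\delta(x_c)$ is where one must be careful, since a Herglotz wave can in principle be large away from the corner; here interior elliptic estimates for $(\Delta+k^2)u^i=0$ on $B_R$ give the needed control. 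Everything else — Green's identity, the CGO construction, the scaling computation — is by now standard in this circle of ideas.
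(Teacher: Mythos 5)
Your overall architecture matches the paper's: a lower bound for the cone Laplace transform of the leading harmonic polynomial $P_N$ (the paper proves $\abs{\int_\cone \rme^{\rho\cdot x}P_N\,\rmd x}\geq c\norm{P_N}\abs{\Re\rho}^{-N-n}$ by a compactness argument on $\{\norm{P}=1\}\subset\mathcal P_N$, rather than by your averaging over directions, but that is a cosmetic difference), an upper bound for the same integral via CGO solutions and Green's identity in terms of $\abs{\Re\rho}^{-N-n-\min(1,\alpha,\beta)}$ plus boundary data of $u^s$ near the corner, a step transferring smallness of $u^s_\infty$ to smallness of $u^s$ at the corner, and finally an optimization in $\tau=\abs{\Re\rho}$.

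The genuine gap is in your first step. You assert that smallness of $u^s_\infty$ forces $\norm{u^s}_{H^2}\lesssim\varepsilon$ near $x_c$ ``possibly after losing a factor controlled by $\mathcal S$.'' No such linear (or even H\"older) bound holds: the corner $x_c$ lies on $\partial\polygon$, well inside $B_R$, and passing from the far-field pattern to near-field values is a severely ill-posed analytic-continuation problem. Under the a priori bound $\norm{u^s}_{H^2(B_{2R})}\leq\mathcal S$ one only gets a conditional stability estimate of logarithmic type; the paper uses precisely
\[
\sup_{x\in\partial\polygon}\abs{u^s(x)}+\abs{\nabla u^s(x)}\ \leq\ \frac{c}{\sqrt{\ln\ln\bigl(\mathcal S/\norm{u^s_\infty}_{L^2(\mathbb S^{n-1})}\bigr)}},
\]
and it is this $\ln\ln$ --- not the $\tau$--$\delta$ optimization, as you claim --- that produces the $\exp\exp$ in \eqref{FFlowerBound}. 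With your linear bound, balancing $\tau^{-(N+n)}\norm{P_N}$ against $\tau^{-(N+n)-\alpha}$ and $\varepsilon\,\tau^{3}$-type errors would yield a single-exponential (indeed essentially power-type) lower bound for $\norm{u^s_\infty}$, far stronger than the theorem and not obtainable by these methods; this mismatch is the sign that the step is unjustified. To repair the argument you must replace the linear claim by the quantitative unique-continuation/propagation-of-smallness estimate above (Proposition 5.10 of \cite{BL2016}) and redo the optimization: setting $R=\sqrt{\ln\ln(\mathcal S/\norm{u^s_\infty})}$ one gets $c\norm{P_N}\leq\tau^{-\gamma}+\tau^{N+n+3}/R$, whose minimization over $\tau$ gives $\norm{P_N}\lesssim R^{-\gamma/(N+n+3+\gamma)}$ and hence the stated double-exponential bound. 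A secondary, fixable imprecision: your test function $\rme^{\rho\cdot(x-x_c)}$ with $\rho\cdot\rho=0$ is harmonic but does not solve the perturbed equation; one needs the full CGO $u_0=\rme^{\rho\cdot x}(1+\psi)$ with $\rho\cdot\rho+k^2=0$ and the $L^p$ remainder bounds (this is also where $\alpha>1/4$ in 3D enters, via the CGO remainder rather than directly via the $\tau$-optimization).
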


\begin{theorem} \label{vanishingThm}
  Let $n\in\{2,3\}$ and $V$ be a qualitatively admissible
  potential. Assume that $k>0$ is a transmission eigenvalue: there
  exists $v,w \in L^2(\polygon)$ such that
  \begin{align*}
    (\Delta+k^2)v &= 0 \quad \mbox{in}\ \ \polygon \\
    (\Delta+k^2(1+V))w &= 0 \quad \mbox{in}\ \ \polygon \\
    w-v &\in H^2_0(\polygon), \quad \norm{v}_{L^2(\polygon)} = 1.
  \end{align*}
  If $v$ can be approximated in the $L^2(\polygon)$-norm by a sequence
  of Herglotz waves with uniformly $L^2(\mathbb S^{n-1})$-bounded
  kernels, then
  \[
  \lim_{r\to0} \frac{1}{m(B(x_c,r))} \int_{B(x_c,r)} \abs{v(x)} dx = 0
  \]
  where $x_c$ is any vertex of $\polygon$ such that $\varphi(x_c) \neq
  0$.
\end{theorem}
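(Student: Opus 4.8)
The plan is to argue by contradiction, weighing the averaged size of $v$ near $x_c$ against the far-field lower bound of Theorem~\ref{lowerBoundThm}. Let $v_j$ be the given approximating Herglotz waves, with kernels $g_j$ satisfying $\norm{g_j}_{L^2(\mathbb S^{n-1})}\le M$ and $v_j\to v$ in $L^2(\polygon)$, and put $\hat v_j:=v_j/\norm{g_j}_{L^2(\mathbb S^{n-1})}$, a normalized incident Herglotz wave. First I would record two elementary facts. Since $\abs{v_j}\le\norm{g_j}_{L^1(\mathbb S^{n-1})}$ pointwise, one has $\norm{v_j}_{L^2(\polygon)}\le(m(\polygon)\,\omega_{n-1})^{1/2}\norm{g_j}_{L^2(\mathbb S^{n-1})}$, and since $\norm{v_j}_{L^2(\polygon)}\to1$ this forces $\norm{g_j}_{L^2(\mathbb S^{n-1})}\ge m_0>0$ for $j$ large; similarly $\abs{\nabla v_j}\le k\norm{g_j}_{L^1(\mathbb S^{n-1})}\le k\,\omega_{n-1}^{1/2}M=:L$ on all of $\R^n$, a Lipschitz bound uniform in $j$.

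The first substantial step is to show that the scattered waves produced by the $v_j$ become negligible, i.e.\ $\norm{u^s_{j,\infty}}_{L^2(\mathbb S^{n-1})}\to0$, where $u^s_j$ is the scattered field for the incident wave $v_j$ as in \eqref{eq:scattering2}. This is in essence the fact from \cite{Ca} recalled in the introduction, and I would re-derive it thus: by the Lippmann--Schwinger identity and the well-posedness of the scattering problem (part of the background assumptions) the map $v_j|_\polygon\mapsto u^s_{j,\infty}$ is continuous from $L^2(\polygon)$ to $L^2(\mathbb S^{n-1})$, so $u^s_{j,\infty}$ converges to the far field $\tilde u^s_\infty$ attached to $\tilde u:=(I+k^2S_V)^{-1}v$, where $S_V$ is the volume potential over $\polygon$; and $\tilde u^s_\infty=0$ because, on gluing $\tilde u-w$ on $\polygon$ to $\tilde u^s:=-k^2S_V\tilde u$ on $\R^n\setminus\overline{\polygon}$, one obtains (using $\tilde u-\tilde u^s=v$ in $\polygon$ and $w-v\in H^2_0(\polygon)$, which makes the Cauchy data match along $\partial\polygon$) a radiating solution of the homogeneous scattering problem on $\R^n$, hence identically zero.

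I would then assume, towards a contradiction, that there are $\delta>0$ and $r_i\downarrow0$ with $A(r_i)\ge\delta$, writing $A(r)$ for the average of $\abs{v}$ over $B(x_c,r)$. From $\int_{B(x_c,r)}\abs{v_j-v}\le m(B(x_c,r))^{1/2}\norm{v_j-v}_{L^2(\polygon)}\to0$, the averages $A_j(r)$ of $\abs{v_j}$ satisfy $A_j(r)\to A(r)$ for each fixed $r$; combined with the Lipschitz bound this gives $\abs{v_j(x_c)}\ge A_j(r)-Lr$. Choosing $i$ with $r_i\le\delta/(4L)$ and then $j$ large enough that $A_j(r_i)\ge\delta/2$ yields $\abs{v_j(x_c)}\ge\delta/4>0$ for all large $j$. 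For such $j$ the normalized wave $\hat v_j$ has order $0$ at $x_c$ with Taylor expansion there beginning with the nonzero constant $\hat v_j(x_c)$, so $\norm{P_0}=\omega_{n-1}\abs{v_j(x_c)}/\norm{g_j}_{L^2(\mathbb S^{n-1})}\ge\omega_{n-1}\delta/(4M)$ in the notation of Theorem~\ref{lowerBoundThm}. Applying that theorem with $\mathcal N=0$ (legitimate since $\varphi(x_c)\neq0$) gives a lower bound $\norm{\hat u^s_{j,\infty}}_{L^2(\mathbb S^{n-1})}\ge\mathcal S/\exp\exp\!\big(c\min(1,\omega_{n-1}\delta/(4M))^{-\ell}\big)=:\rho_1>0$ that is independent of $j$, whence $\norm{u^s_{j,\infty}}_{L^2(\mathbb S^{n-1})}=\norm{g_j}_{L^2(\mathbb S^{n-1})}\norm{\hat u^s_{j,\infty}}_{L^2(\mathbb S^{n-1})}\ge m_0\rho_1>0$ for all large $j$, contradicting the first step. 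I expect the main obstacle to be that first step: because the interior transmission problem is itself ill-posed exactly at transmission eigenvalues, one cannot solve for a companion of $v_j$ directly and must pass through the well-posed exterior scattering problem together with the given companion $w$; once Theorem~\ref{lowerBoundThm} is in hand, the remainder needs no compactness and amounts to bookkeeping with the uniform kernel bounds.
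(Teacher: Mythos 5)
Your proposal is correct, and it rests on the same two pillars as the paper's argument: the fact that Herglotz approximations of a transmission eigenfunction produce vanishing far-field patterns (your first step is essentially a re-derivation of the paper's Proposition~\ref{prop:smallFF}, identifying the scattered field for incident $v$ with the zero extension of $w-v\in H^2_0(\polygon)$), and the corner lower bound of Theorem~\ref{lowerBoundThm} applied with $\mathcal N=0$ after normalizing by $\norm{g_j}_{L^2(\mathbb S^{n-1})}$, using exactly the same two-sided estimate $m_0\le\norm{g_j}\le M$ that the paper uses. Where you genuinely diverge is the endgame. The paper argues directly: it Taylor-expands $v_j$ at $x_c$, splits into the cases $N_j\geq 1$ (where the uniform kernel bound alone gives $\norm{v}_{L^1(B)}\lesssim r^{n+1}$) and $N_j=0$ (where Theorem~\ref{lowerBoundThm} together with the decaying far fields forces $\norm{P_{j,0}}\to0$), and in either case lands on the quantitative estimate $\norm{v}_{L^1(B\cap\polygon)}\leq C r^{n+1}$, i.e.\ the average is $O(r)$. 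You instead argue by contradiction, replacing the Taylor machinery with the uniform Lipschitz bound $\abs{\nabla v_j}\le k\,\omega_{n-1}^{1/2}M$ to convert a putative lower bound $\delta$ on the averages into $\abs{v_j(x_c)}\geq\delta/4$, which makes the order-$0$ case automatic and lets you skip the case analysis entirely. Your route is shorter and cleaner; the paper's route buys a rate of decay (and, in the subsequence case $N_j\geq1$, even first-order vanishing of $v$ at $x_c$), which your purely qualitative contradiction does not recover. The only points deserving a touch more care in a final write-up are cosmetic: $A_j(r)$ should be read as $m(B)^{-1}\int_{B\cap\polygon}\abs{v_j}$ so that the convergence $A_j(r)\to A(r)$ follows from $L^2(\polygon)$-convergence (only the lower bound $A_j(r_i)\geq\delta/2$ is actually needed), and the continuity of $u^i|_\polygon\mapsto u^s_\infty$ on $L^2(\polygon)$ should be anchored to the Lippmann--Schwinger formulation as in Proposition~\ref{prop:smallFF} rather than asserted; neither is a gap.
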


\begin{remark}
  A sequence of Herglotz waves $v_j$ with univormly bounded kernels
  has uniformly bounded $L^2$-norms in any fixed bounded set. However
  the converse is not true by inspecting a sequence of spherical
  harmonics $g_j = Y_j^0$. In other words the condition we have here
  is rather technical. See Section \ref{sect:discussion} for more relevant discussion.
\end{remark}

\section{Auxiliary results} \label{sect:auxResults}

In this section, we collect three auxiliary propositions that follow
without too much effort from our previous results in \cite{BL2016}
concerning the corner scattering. We add a proposition showing that in
the presence of transmission eigenfunctions incident waves creating
arbitrary small far-field patterns can be generated. Finally, another
proposition gives a lower bound for the Laplace transform of a
harmonic polynomial. The latter is necessary for quantitative
estimates involving incident Herglotz waves in corner scattering. In
comparison, we note that the paper \cite{BL2016} is mainly concerned
with corner scattering associated with incident plane waves.

\begin{proposition}
  Let the background assumptions hold with $n\in\{2,3\}$, $V$
  qualitatively admissible, $u^i$ a normalized Herglotz wave and let
  $\mathcal S \geq 1$. Then there is $\varepsilon_m(\mathcal S, k, R)
  > 0$ such that if $\norm{u^s}_{H^2(B_{2R})} \leq \mathcal S$ and
  $\norm{u^s_\infty}_{L^2(\mathbb{S}^{n-1})} \leq \varepsilon_m$ then
  \begin{equation} \label{boundaryBound}
  \sup_{x\in \partial\polygon} \abs{u^s(x)} + \abs{\nabla u^s(x)} \leq
  \frac{c}{ \sqrt{\ln \ln \frac{\mathcal S}
      {\norm{u^s_\infty}_{L^2(\mathbb{S}^{n-1})}}} }
  \end{equation}
  for some $c = c(V, \mathcal S, k, R)<\infty$.
\end{proposition}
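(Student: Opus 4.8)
The bound \eqref{boundaryBound} is a \emph{far-field-to-near-field stability estimate}, and for incident plane waves it is essentially contained in \cite{BL2016}; the plan is to recall the two propagation-of-smallness steps behind that argument and to check that they use nothing about the incident wave beyond an a priori bound on $u^s$. Since $\supp V\subset\overline{\polygon}$, the scattered wave solves the free Helmholtz equation $(\Delta+k^2)u^s=0$ in $\R^n\setminus\overline{\polygon}$ and is radiating, hence real-analytic there; fix radii $\rho_0<\rho_1<R$ with $\overline{\polygon}\subset B_{\rho_0}$. \emph{Step one} pushes the smallness inward from infinity: expanding $u^s=\sum_{l,m}a_{lm}h_l^{(1)}(k\abs{x})Y_l^m(x/\abs{x})$ for $\abs{x}>\rho_0$, one has $\norm{u^s_\infty}_{L^2(\mathbb S^{n-1})}^2\simeq\sum_{l,m}|a_{lm}|^2$, while the trace theorem applied to $\norm{u^s}_{H^2(B_{2R})}\le\mathcal S$ gives $\sum_{l,m}|a_{lm}|^2\abs{h_l^{(1)}(k\rho_0)}^2\lesssim\mathcal S^2$. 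Because $\abs{h_l^{(1)}(kr)}$ grows super-exponentially in $l$ for fixed $r$ while the ratio $\abs{h_l^{(1)}(k\rho)}/\abs{h_l^{(1)}(k\rho_0)}$ decays geometrically in $l$ when $\rho>\rho_0$, splitting the series at a cutoff $L$ and optimizing over $L$ yields, for $\rho_0<\rho\le\rho_1$, a bound $\norm{u^s}_{L^2(\partial B_\rho)}\le\mathcal S\,\omega_1\big(\mathcal S/\norm{u^s_\infty}_{L^2(\mathbb S^{n-1})}\big)$ with $\omega_1$ a logarithmic-type modulus of continuity; interior elliptic estimates for $(\Delta+k^2)u^s=0$ on a slightly smaller annulus then promote this to the same smallness for $u^s$ \emph{and} $\nabla u^s$ in $L^2$ of a shell $\Sigma\subset\R^n\setminus\overline{\polygon}$ enclosing $\polygon$.

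\emph{Step two} propagates the smallness up to $\partial\polygon$. On the bounded Lipschitz domain $G:=B_R\setminus\overline{\polygon}$ the function $u^s$ solves $(\Delta+k^2)u^s=0$, satisfies $\norm{u^s}_{H^2(G)}\le\mathcal S$, and is small together with its gradient on $\Sigma\subset G$; a quantitative unique continuation estimate of three-balls / Carleman type then propagates the smallness through $G$ up to and including $\partial\polygon$. Since we reach the boundary $\partial\polygon$ with no boundary data — and in particular its vertices, around which $G$ has a reflex corner — this step carries only a logarithmic modulus $\omega_2$, and composing $\omega_2$ with $\omega_1$ produces the iterated logarithm in \eqref{boundaryBound}. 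The exponent $1/2$ there is merely a convenient form (any modulus tending to $0$ as $\norm{u^s_\infty}_{L^2(\mathbb S^{n-1})}\to0$ would do for our applications), and this also fixes the threshold $\varepsilon_m=\varepsilon_m(\mathcal S,k,R)$ below which the right-hand side of \eqref{boundaryBound} is finite and the estimate is meaningful.

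It remains to pass from $L^2$-smallness near $\partial\polygon$ to the pointwise bound on $\sup_{x\in\partial\polygon}(\abs{u^s(x)}+\abs{\nabla u^s(x)})$. Along a relatively open smooth piece of $\partial\polygon$, $u^s$ is from the exterior side a genuine Helmholtz solution, hence $C^\infty$ up to that piece, so its local $L^2$-smallness upgrades to local $C^1$-smallness by interior estimates interpolated against the a priori bound; the transmission relations (continuity of $u^s$ and $\d_\nu u^s$ across $\partial\polygon$) together with the convexity of each corner (interior angle $<\pi$, whence $u^s\in C^{1,\beta}$ near each vertex by elliptic regularity for the transmission problem with $C^\alpha$ coefficient) let $u^s$ and $\nabla u^s$ extend continuously to the vertices as well, so the supremum is finite and dominated by the $L^2$ bound of Step two. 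Finally, the entire argument uses only $\norm{u^s}_{H^2(B_{2R})}\le\mathcal S$; since a normalized Herglotz wave satisfies $\norm{u^i}_{H^2(B_{2R})}\le C(k,R)$ by Cauchy--Schwarz in \eqref{eq:herg1}, such a bound holds with $\mathcal S$ depending only on $V,k,R$ by well-posedness of \eqref{eq:scattering2}, and the plane-wave analysis of \cite{BL2016} applies verbatim.

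The main obstacle is the two-fold logarithmic instability: inverting the super-smoothing far-field map is only logarithmically stable, and so is continuing a Helmholtz solution up to the (corner) boundary of $G$, and it is the \emph{composition} of these — not either step alone — that forces the iterated logarithm in \eqref{boundaryBound}. Chaining the two estimates while keeping every constant dependent only on $V,\mathcal S,k,R$, and dealing with the reduced regularity of $u^s$ at the vertices, are the places where care is needed; since both were already handled in \cite{BL2016} in the plane-wave case, the work here is essentially one of adaptation and bookkeeping.
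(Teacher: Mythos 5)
Your outline is sound and coincides with the paper's route: the paper gives no independent proof of this proposition, merely observing that it is a less general version of Proposition 5.10 in \cite{BL2016}, whose argument is exactly the two-step propagation of smallness you reconstruct (far-field to near-field via the spherical-harmonic/Hankel expansion, then quantitative unique continuation up to $\partial\polygon$ combined with the a priori $W^{2,p}$ regularity of $u^s$ to reach the pointwise $C^1$ bound). Your key observation --- that the incident wave enters only through the hypothesis $\norm{u^s}_{H^2(B_{2R})}\le\mathcal S$ --- is precisely why the plane-wave analysis of \cite{BL2016} transfers verbatim to Herglotz waves.
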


This is a less general version of Proposition 5.10 in our previous
paper. We will also need a ``converse'' result estimating the
far-field pattern by the near-field. In more detail, we will build
incident waves with arbitrarily small far-field patterns in the
presence of a transmission eigenfunction (cf. \cite{Ca}).

\begin{proposition} \label{prop:smallFF}
  Let the background assumptions hold with $V$ supported in
  $\overline{\Omega}$, and assume that $(v,w) \in L^2(\Omega)\times
  L^2(\Omega)$ are a pair of transmission eigenfunctions on a bounded
  domain $\Omega$. There is $C = C(V,k) < \infty$ such that if $v_j
  \in L^2_{loc}$ is an incident wave such that
  $\norm{v-v_j}_{L^2(\Omega)} < \varepsilon$ then the produced
  far-field pattern has $\norm{v^s_{j\infty}}_{L^2(\mathbb S^{n-1})} <
  C \varepsilon$.
\end{proposition}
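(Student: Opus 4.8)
The plan is to exploit the defining property of transmission eigenfunctions: $(v,w)$ solve the interior transmission problem on $\Omega$, so $w-v \in H^2_0(\Omega)$ and $w$ satisfies $(\Delta+k^2(1+V))w=0$ in $\Omega$. The key point is that if one could use $v$ itself as an incident wave, the scattered field would be \emph{exactly} $w-v$ extended by zero outside $\Omega$, and hence the far-field pattern would vanish identically. The function $v$ is not itself a Herglotz/radiating wave in general, so we instead feed in a genuine incident wave $v_j$ that is $\varepsilon$-close to $v$ in $L^2(\Omega)$, and estimate how far the resulting far-field pattern is from zero. First I would set up the scattering problem \eqref{eq:scattering2} with incident field $v_j$, producing a unique scattered field $v^s_j$ with far-field $v^s_{j\infty}$. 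Write $u_j = v_j + v^s_j$ for the total field.

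Next I would introduce the natural comparison function. Let $\widetilde w = w - v \in H^2_0(\Omega)$, extended by zero to all of $\R^n$; this extension lies in $H^2_{loc}(\R^n)$ and vanishes to second order on $\partial\Omega$. A direct computation using \eqref{ITP1}--\eqref{ITP2} gives $(\Delta+k^2(1+V))(v+\widetilde w) = (\Delta+k^2(1+V))w = 0$ in $\Omega$, while outside $\Omega$ we have $\widetilde w = 0$ and $V=0$, so $(\Delta+k^2)(v+\widetilde w)=(\Delta+k^2)v$ there. The obstruction is that $v$ is only defined on $\Omega$ and need not extend to a solution on $\R^n$; this is exactly where the approximating sequence enters. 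The strategy is to measure the defect $f_j := (\Delta+k^2(1+V))(v_j + \widetilde w)$, which is a compactly supported distribution (supported in $\overline\Omega$) controlled in an appropriate negative Sobolev norm by $\norm{v_j-v}_{L^2(\Omega)} < \varepsilon$, since on $\Omega$ it equals $(\Delta+k^2(1+V))v_j - (\Delta+k^2(1+V))v + (\Delta+k^2(1+V))(v+\widetilde w) = k^2 V (v_j-v) + (\Delta+k^2)(v_j - v)$, and the elliptic-operator part can be handled by duality against $H^2_0$ test functions (the boundary terms vanish because $\widetilde w$ has zero Cauchy data on $\partial\Omega$).

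Then I would compare $v^s_j$ with the candidate $\widetilde w$. Both $v^s_j$ and $\widetilde w$ vanish at infinity, but only $v^s_j$ a priori satisfies the Sommerfeld radiation condition; $\widetilde w$ is compactly supported, hence also trivially radiating. Consider $h_j := v^s_j - \widetilde w$. Using $u_j = v_j + v^s_j$ one checks $(\Delta+k^2(1+V))h_j = -f_j$ in $\R^n$ with $h_j$ radiating; by the standard mapping properties of the resolvent for the Helmholtz operator with a bounded compactly supported potential (well-posedness of \eqref{eq:scattering2}, together with the Lippmann--Schwinger formulation), we get $\norm{h_j}_{H^2(B_{2R})} \le C(V,k)\,\norm{f_j}_{(H^2_0(\Omega))'} \le C(V,k)\,\varepsilon$. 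Finally, since $\widetilde w$ is compactly supported in $\overline\Omega$ its far-field pattern is zero, so $v^s_{j\infty} = h_{j\infty}$, and the far-field pattern depends continuously on the near field via the representation formula $v^s_{j\infty}(\hat x) = c_n \int_{\R^n} e^{-ik\hat x\cdot y}\big(k^2 V u_j(y)\big)\,dy$ — or more robustly via a Green's-representation integral over $\partial B_{2R}$ of $h_j$ and its normal derivative — yielding $\norm{v^s_{j\infty}}_{L^2(\mathbb S^{n-1})} \le C(V,k)\,\norm{h_j}_{H^2(B_{2R})} \le C(V,k)\,\varepsilon$. The main obstacle is the bookkeeping in the second step: making precise that the distributional defect $f_j$ is genuinely of size $O(\varepsilon)$ in the dual norm that the Helmholtz resolvent estimate can absorb, which hinges crucially on $w-v\in H^2_0(\Omega)$ so that no boundary layer contributions survive on $\partial\Omega$.
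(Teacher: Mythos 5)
Your argument is correct and is essentially the paper's own proof: the zero-extension of $w-v\in H^2_0(\Omega)$ is exactly the (compactly supported, hence trivially radiating with vanishing far field) scattered wave produced by the source $-k^2Vv$, and the far field of $v^s_j$ is then controlled by linearity and boundedness of the source-to-far-field map applied to $-k^2V(v_j-v)$. One minor simplification: since $v$ and $v_j$ both solve the Helmholtz equation in $\Omega$ and your $\widetilde w$ extends to $H^2(\R^n)$, the defect $f_j$ is simply $k^2V(v_j-v)\chi_\Omega\in L^2$ with norm at most $k^2\norm{V}_{L^\infty}\varepsilon$, so no negative Sobolev norms or boundary-layer bookkeeping are actually needed.
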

\begin{proof}
  Let $v_0^i$ be the zero-extension of $v$ to the whole $\R^n$, and
  let $v_0^s$ be the radiating solution to $(\Delta+k^2(1+V))v_0^s =
  -k^2 V v_0^i$. Also let $\nu^s_0$ be the zero-extension of $w-v\in
  H^2_0(\Omega)$ to $\R^n$.  By standard scattering theory
  (e.g. Chapter 8 in \cite{CK}) we see that $v_0^s = \nu_0^s$ since
  \[
  (\Delta+k^2(1+V)) v_0^s = -k^2 V v_0^i = - k^2 V v =
  (\Delta+k^2(1+V)) \nu_0^s
  \]
  in $\R^n$ and both satisfy the Sommerfeld radiation condition
  trivially. Hence the far-field pattern of $v_0^s$ is zero.

  Since $v_j$ approximates $v$ in $L^2(\Omega)$, and $V$ is supported
  on $\overline{\Omega}$, we have $-k^2 V v_j$ approximating $-k^2 V
  v_0^i$ in $\R^n$. Let $v_j^s$ be the scattered wave arising from the
  incident wave $v_j$ and potential $V$. Then, again from standard
  scattering theory, its far-field pattern approximates the far-field
  pattern of $v_0^s$, i.e. zero. The operators involved are all
  bounded, so
  \begin{equation}
    \norm{ v^s_{j\infty} }_{L^2(\mathbb S^{n-1})} < C_{V,k}
    \varepsilon.
  \end{equation}
\end{proof}

We also recall the existence of complex geometrical optics solutions.
\begin{proposition} \label{prop:CGO}
  Let $n\in\{2,3\}$, $k>0$ and let $V$ be a qualitatively admissible
  potential.  Then there is $p = p(V,n) \geq 2$ and $c = c(V,R,k,n) <
  \infty$ with the following properties: if $\rho\in\C^n$ satisfies
  $\rho\cdot\rho+k^2=0$ and $\abs{\Im\rho} \geq c^{(n+1)/2}$ then
  there is $\psi\in L^p(\R^n)$ such that $u_0(x)=e^{\rho\cdot{x}}
  (1+\psi(x))$ solves $(\Delta+k^2(1+V))u_0=0$ in $\R^n$, and
  \[
  \norm{\psi}_{L^p(\R^n)} \leq c \abs{\Im\rho}^{-n/p-\beta}
  \]
  for some $\beta = \beta(V,n) > 0$. In addition there is the norm
  estimate $\norm{\psi}_{H^2(B_{2R})} \leq c\abs{\rho}^2$.
\end{proposition}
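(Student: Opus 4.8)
The plan is to carry out the classical Sylvester--Uhlmann complex geometrical optics construction in the form adapted to bounded, compactly supported potentials; here $V=\chi_\polygon\varphi\in L^\infty(\R^n)$ with $\supp V\subset\overline{\polygon}\subset B_R$, which is all that will be used. First I substitute the ansatz $u_0=e^{\rho\cdot x}(1+\psi)$ into $(\Delta+k^2(1+V))u_0=0$: since $\Delta(e^{\rho\cdot x}f)=e^{\rho\cdot x}\bigl(\Delta f+2\rho\cdot\nabla f+(\rho\cdot\rho)f\bigr)$ and $\rho\cdot\rho=-k^2$, the leading terms cancel and the equation reduces to finding $\psi$ solving the conjugated equation
\[
(\Delta+2\rho\cdot\nabla)\psi=-k^2V(1+\psi)\qquad\text{in }\R^n.
\]
The shifted Laplacian $\Delta+2\rho\cdot\nabla=e^{-\rho\cdot x}\Delta\,e^{\rho\cdot x}$ has the Faddeev right inverse $G_\rho$, the Fourier multiplier with symbol $(-\abs{\xi}^2+2\rmi\,\rho\cdot\xi)^{-1}$, so the problem becomes the fixed point equation $\psi=-k^2G_\rho\bigl(V(1+\psi)\bigr)$, that is, $(I+k^2G_\rho V\,\cdot\,)\psi=-k^2G_\rho V$.

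The analytic core is the mapping property of $G_\rho$. There is a Sobolev-dual pair of exponents $2\le p$, $1<q\le p$, with $p=p(V,n)$, for which a slightly sub-critical resolvent estimate of Kenig--Ruiz--Sogge / Sylvester--Uhlmann type holds,
\[
\norm{G_\rho f}_{L^p(\R^n)}\le c\,\abs{\Im\rho}^{-\gamma}\norm{f}_{L^q(\R^n)},\qquad\gamma=\gamma(n,p)>0,
\]
where moving slightly off the scaling-critical line $\tfrac1q-\tfrac1p=\tfrac2n$ trades a small loss of integrability (this is why $p$ is not canonical, and why in two dimensions the endpoint must be avoided with $p$ large but finite) for the decaying power of $\abs{\Im\rho}$. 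Since $V$ is bounded with support in $B_R$, Hölder's inequality makes multiplication by $V$ a bounded map $L^p(\R^n)\to L^q(\R^n)$ of norm at most $\norm{V}_{L^\infty}\abs{B_R}^{1/q-1/p}$, so $T:=k^2G_\rho(V\,\cdot\,)$ is bounded on $L^p(\R^n)$ with $\norm{T}\le c\,\abs{\Im\rho}^{-\gamma}$. Consequently, once $\abs{\Im\rho}$ exceeds a threshold $c^{(n+1)/2}$ depending only on $V,R,k,n$, we have $\norm{T}\le\tfrac12$, so $I+T$ is invertible by a Neumann series, $\psi:=-k^2(I+T)^{-1}G_\rho V\in L^p(\R^n)$ is well defined, and
\[
\norm{\psi}_{L^p(\R^n)}\le 2k^2\norm{G_\rho V}_{L^p(\R^n)}\le c\,\abs{\Im\rho}^{-n/p-\beta}
\]
for a suitable $\beta=\beta(V,n)>0$ (choosing $p$ so that $n/p<\gamma$ and absorbing the surplus decay). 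Unwinding the substitution shows that $u_0=e^{\rho\cdot x}(1+\psi)$ solves $(\Delta+k^2(1+V))u_0=0$ in $\R^n$.

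For the $H^2(B_{2R})$ estimate I return to $(\Delta+2\rho\cdot\nabla)\psi=-k^2V(1+\psi)$ and write $\psi=G_\rho g$ with $g:=-k^2V(1+\psi)$, a compactly supported $L^2$ function. The $L^p(\R^n)$ bound above combined with Hölder on the bounded set $B_{3R}$ (using $p\ge2$), together with the weighted $L^2$ estimates $\norm{G_\rho g}_{L^2_{-\delta}}\lesssim\abs{\rho}^{-1}\norm{g}_{L^2_\delta}$ and $\norm{\nabla G_\rho g}_{L^2_{-\delta}}\lesssim\norm{g}_{L^2_\delta}$, yields $\norm{\psi}_{L^2(B_{3R})}+\norm{\nabla\psi}_{L^2(B_{3R})}\le c$; then $\Delta\psi=-2\rho\cdot\nabla\psi+g$ is bounded in $L^2(B_{3R})$ by $c\abs{\rho}$, and interior elliptic regularity upgrades this to $\norm{\psi}_{H^2(B_{2R})}\le c\abs{\rho}\le c\abs{\rho}^2$. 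The main obstacle is the first step of the analytic core --- the sub-critical $L^q\to L^p$ resolvent estimate for $G_\rho$ with genuine decay in $\abs{\Im\rho}$ and with $p$ in a range compatible with a merely Hölder-continuous rather than smooth potential --- but this is exactly what was established for the corner-scattering analysis in \cite{BL2016}, from which the present statement follows with little additional work.
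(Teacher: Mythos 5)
Your argument is correct and matches the paper's route in the only sense available: the paper gives no proof of Proposition~\ref{prop:CGO} at all, stating only that it ``specializes Proposition 7.6 from \cite{BL2016}.'' What you have written out --- the conjugation to $(\Delta+2\rho\cdot\nabla)\psi=-k^2V(1+\psi)$, inversion by Neumann series using a sub-critical $L^q\to L^p$ bound for the Faddeev operator $G_\rho$ with decay in $\abs{\Im\rho}$, and the weighted-$L^2$/elliptic-regularity upgrade to $H^2(B_{2R})$ --- is precisely the standard construction underlying that cited result, and your deferral of the key resolvent estimate to \cite{BL2016} is at the same level of detail as the paper itself.
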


Proposition~\ref{prop:CGO} specializes Proposition 7.6 from \cite{BL2016}. Also,
mainly by Corollary 6.2 from that same paper, together with the use of
Taylor's theorem on the real-analytic incident wave $u^i$, we can show

\begin{proposition}
  Let $n\in\{2,3\}$ and let the background assumptions hold with $u^i$
  a normalized Herglotz wave. Let $V=\chi_\polygon \varphi$ be a
  qualitatively admissible potential. Choose coordinates such that the
  origin is a vertex of $\polygon$ where $\varphi\neq0$. Let $N\in\N$
  be such that $\partial^\gamma u^i(\bar0)=0$ for $\abs{\gamma}<N$ and
  set
  \[
  P_N(x) = \sum_{\abs{\gamma}=N} \frac{\partial^\gamma
    u^i(\bar0)}{\gamma!} x^\gamma.
  \]
  Let $\rho\in\C^n$ be such that it satisfies the assumptions of
  Proposition \ref{prop:CGO}, $\abs{\Re\rho} \geq \max(1,k)$ and
  $\Re\rho \cdot x \leq -\delta_0\abs{x}\abs{\Re\rho}$ for some
  $\delta_0>0$ and any $x\in\polygon$. Then
  \begin{equation} \label{upperBound}
  c \abs{ \int_\cone e^{\rho\cdot{x}} P_N(x) dx } \leq
  \abs{\Re\rho}^{-N-n-min(1,\alpha,\beta)} + \abs{\Re\rho}^3
  \sup_{\partial (\cone \cap B(\bar0,h))} \{\abs{u^s}, \abs{\nabla
    u^s}\}
  \end{equation}
  where $\cone$ is the open cone generated by $\polygon$ at the
  origin, $h=h(\polygon)$ is the minimal distance from any vertex of
  $\polygon$ to any of its non-adjacent edges, and the constant $c>0$
  depends on $V, N, \delta_0$ and $k$.
\end{proposition}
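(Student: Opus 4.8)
The plan is to run the standard corner-scattering identity with the complex geometrical optics (CGO) solution of Proposition~\ref{prop:CGO}, feeding in the Taylor expansion of the (real-analytic, in fact entire) incident Herglotz wave $u^i$ at the vertex; this is essentially Corollary~6.2 of \cite{BL2016} with the plane wave replaced by the polynomial $P_N$. Put $D := \cone \cap B(\bar0,h)$. By the choice of $h$, within $B(\bar0,h)$ the polygon agrees with its corner cone, so $D = \polygon \cap B(\bar0,h)$ and $V = \varphi$ on $D$. Let $u_0(x) = e^{\rho\cdot x}(1+\psi(x))$ be the CGO solution from Proposition~\ref{prop:CGO}, and recall $u = u^i + u^s$ with $(\Delta+k^2(1+V))u = 0 = (\Delta+k^2(1+V))u_0$ in $\R^n$ and hence $(\Delta+k^2)u^s = -k^2 V u$. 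Green's second identity over $D$ collapses all bulk terms except the potential one,
\[
k^2 \int_D \varphi\, u_0\, u^i \, dx = -\int_{\partial D} \big( u_0\,\partial_\nu u^s - u^s\,\partial_\nu u_0 \big)\, dS ,
\]
and the task reduces to reading \eqref{upperBound} off this equation.

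For the left-hand side I would separate off $\int_\cone e^{\rho\cdot x} P_N\, dx$. Write $u^i = P_N + r_N$, where by Taylor's theorem --- valid since $u^i$ is entire with $|\partial^\gamma u^i|$ bounded on compacts by $k^{|\gamma|}|\mathbb{S}^{n-1}|^{1/2}$ as $\norm{g}_{L^2(\mathbb{S}^{n-1})}=1$ --- one has $|r_N(x)| \leq C|x|^{N+1}$ near $\bar0$ with $C=C(k,N,n)$; write $\varphi = \varphi(\bar0) + (\varphi-\varphi(\bar0))$ with $|\varphi(x)-\varphi(\bar0)| \leq C|x|^\alpha$; and keep $u_0 = e^{\rho\cdot x}(1+\psi)$. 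Then $\varphi\, u_0\, u^i = \varphi(\bar0)\, e^{\rho\cdot x} P_N$ plus error terms of the schematic types $e^{\rho\cdot x}|x|^\alpha P_N$, $e^{\rho\cdot x}|x|^{N+1}$, $e^{\rho\cdot x}\psi P_N$ and $e^{\rho\cdot x}\psi|x|^{N+1}$. Using $|e^{\rho\cdot x}| = e^{\Re\rho\cdot x} \leq e^{-\delta_0|x||\Re\rho|}$ on $\cone$ one has $\int_\cone e^{-\delta_0|x||\Re\rho|}|x|^s\, dx \sim |\Re\rho|^{-s-n}$, and the $\psi$-terms are handled by H\"older with $\norm{\psi}_{L^p(\R^n)} \leq c|\Im\rho|^{-n/p-\beta}$ and $|\Im\rho|^2 = |\Re\rho|^2+k^2$; moreover, since $|u^i| \leq |\mathbb{S}^{n-1}|^{1/2}$ everywhere on $\R^n$, replacing $\int_D$ by $\int_\cone$ costs only $\mathcal O(e^{-c|\Re\rho|})$. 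Collecting, the left-hand side equals $k^2\varphi(\bar0)\int_\cone e^{\rho\cdot x} P_N\, dx$ up to a term bounded by a constant times $|\Re\rho|^{-N-n-\min(1,\alpha,\beta)}$.

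For the boundary integral, pull out $\sup_{\partial D}\{|u^s|,|\nabla u^s|\}$ and bound the rest by $\int_{\partial D}(|u_0|+|\nabla u_0|)\, dS$. Split $\partial D = \Gamma_f \cup \Gamma_s$ with $\Gamma_f = \partial\cone \cap B(\bar0,h)$ the flat faces and $\Gamma_s = \overline{\cone} \cap \partial B(\bar0,h)$ the spherical cap. On $\Gamma_s$ the weight $|e^{\rho\cdot x}| \leq e^{-\delta_0 h|\Re\rho|}$ is exponentially small while $u_0,\nabla u_0$ are at most polynomially large in $|\rho|$ (using $\norm{\psi}_{H^2(B_{2R})} \leq c|\rho|^2$, Sobolev embedding for $n\leq3$, and a trace estimate for $\nabla\psi$), so that contribution is $\mathcal O(e^{-c|\Re\rho|})$. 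On $\Gamma_f$ one uses $\int_{\Gamma_f} e^{-\delta_0|x||\Re\rho|}|x|^s\, dS \sim |\Re\rho|^{-s-(n-1)}$ with $\norm{1+\psi}_{L^\infty(\Gamma_f)} \lesssim |\rho|^2$; the only delicate term is $\nabla\psi$, which is not in $L^\infty$ when $n=3$ and must instead be controlled through its trace, $\norm{\nabla\psi}_{H^{1/2}(\Gamma_f)} \lesssim \norm{\psi}_{H^2(B_{2R})} \lesssim |\rho|^2$, paired against $e^{-\delta_0|x||\Re\rho|}$ by H\"older. This gives $\int_{\partial D}(|u_0|+|\nabla u_0|)\, dS \lesssim |\Re\rho|^3$, hence the boundary term is $\leq C|\Re\rho|^3 \sup_{\partial(\cone\cap B(\bar0,h))}\{|u^s|,|\nabla u^s|\}$. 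Dividing by $k^2|\varphi(\bar0)|$ and absorbing all constants (depending on $V,N,\delta_0,k$) into one $c>0$ yields \eqref{upperBound}. The main obstacle is precisely this last step --- keeping the powers of $|\Re\rho|$ in the boundary integral under control in three dimensions, where $H^2 \not\hookrightarrow W^{1,\infty}$ forces the trace argument for $\nabla\psi$ and the careful pairing of $\psi$ against the decaying exponential --- which is the quantitative bookkeeping already carried out in Corollary~6.2 of \cite{BL2016}.
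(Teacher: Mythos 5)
Your proposal is correct and follows essentially the same route the paper intends: it is the corner-scattering integral identity of Corollary~6.2 in \cite{BL2016} (Green's identity over the truncated cone against the CGO solution of Proposition~\ref{prop:CGO}), combined with Taylor's theorem on the real-analytic incident wave $u^i$ to isolate $P_N$, the H\"older-continuity of $\varphi$ and the $L^p$-decay of $\psi$ for the $\abs{\Re\rho}^{-N-n-\min(1,\alpha,\beta)}$ remainder, and the exponential decay of $e^{\Re\rho\cdot x}$ on the cone for the $\abs{\Re\rho}^3$ boundary contribution. The bookkeeping, including the trace argument for $\nabla\psi$ when $n=3$, matches the paper's cited source, so no further changes are needed.
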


\bigskip
Next is the turn of a lower bound to the Laplace transform for
homogeneous harmonic polynomials of arbitrary degree. The proof is a
compactness argument with basis in the non-vanishingness proofs from
\cite{BPS} and \cite{PSV}. We recall that the norm for homogeneous
polynomials is
\[
\norm{P} = \int_{\mathbb{S}^{n-1}} \abs{P(\theta)} d\sigma(\theta).
\]

\begin{proposition}
  Let $n\in\{2,3\}$, $\cone \neq \emptyset$ be either an open orthant
  (3D) or an oblique open cone (2D). For $N\in\N$ set
  \[
  \mathcal P_N = \Big\{ P:\C^n\to\C \Big| \Delta P\equiv0, P(x) =
  \sum_{\abs{\gamma}=N} c_\gamma x^\gamma \Big\}.
  \]
  Let the angle of $\cone$ be at most $2\alpha_m < \pi$ and let
  $\alpha_m + \alpha_d < \pi/2$.  Then there is $\tau_0>0$ and $c>0$,
  both depending only on $\cone, N, n, \alpha_m+\alpha_d$ with the
  following properties: If $P\in\mathcal P_N$ then there is a curve
  $\tau\mapsto \rho(\tau)\in\C^n$ satisfying
  $\rho(\tau)\cdot\rho(\tau)+k^2=0$, $\tau=\abs{\Re\rho(\tau)}$,
  \[
  \Re\rho(\tau)\cdot{x} \leq -\cos(\alpha_m+\alpha_d)
  \abs{\Re\rho(\tau)} \abs{x}
  \]
  for all $x\in\cone$, and such that if $\tau\geq\tau_0$ then
  \begin{equation} \label{lowerBound}
  \abs{\int_\cone e^{\rho(\tau)\cdot{x}} P(x) dx} \geq
  \frac{c\norm{P}} {\abs{\Re\rho(\tau)}^{N+n}}.
  \end{equation}
\end{proposition}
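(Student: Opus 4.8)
The plan is to argue by contradiction and compactness, following the scheme in \cite{BPS, PSV}. First I would parametrize the admissible curves $\rho(\tau)$ explicitly: writing $\rho(\tau) = \tau(\xi + \rmi\eta)$ with $\xi,\eta\in\mathbb{S}^{n-1}$ orthogonal unit vectors chosen so that $\xi\cdot x \leq -\cos(\alpha_m+\alpha_d)\abs{x}$ for all $x\in\cone$ (this is possible precisely because $\alpha_m+\alpha_d<\pi/2$ and the cone has half-angle $\leq\alpha_m$), one corrects for the constraint $\rho\cdot\rho+k^2=0$ by a lower-order perturbation of size $O(1/\tau)$. The leading behaviour is then governed by $\int_\cone e^{\tau\xi\cdot x} e^{\rmi\tau\eta\cdot x} P(x)\,dx$. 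Since $P$ is homogeneous of degree $N$, the substitution $x\mapsto x/\tau$ gives exactly the scaling $\tau^{-N-n}$ in front of a quantity of the form $\int_\cone e^{(\xi+\rmi\eta)\cdot y} P(y)\,dy$, so the content of the claim is that this integral is bounded below by $c\norm{P}$ uniformly over $P\in\mathcal{P}_N$ and over the (compact) set of admissible direction pairs $(\xi,\eta)$, with the perturbation contributing only a vanishing correction as $\tau\to\infty$.

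Next I would set up the compactness step. The space $\mathcal{P}_N$ of homogeneous harmonic polynomials of degree $N$ in $n$ variables is finite-dimensional, and restricting to $\norm{P}=1$ gives a compact set; the admissible pairs $(\xi,\eta)$ also range over a compact set (a closed subset of the Stiefel manifold of orthonormal $2$-frames, cut out by the closed conditions coming from the cone). The map $(P,\xi,\eta)\mapsto \int_\cone e^{(\xi+\rmi\eta)\cdot y}P(y)\,dy$ is continuous, and the integral converges absolutely because $\xi\cdot y\leq -\cos(\alpha_m+\alpha_d)\abs{y}$ forces exponential decay. So it suffices to show this integral is \emph{nonzero} for every admissible $(P,\xi,\eta)$ with $P\not\equiv0$; continuity plus compactness then yield a uniform positive lower bound $c>0$, and a corresponding threshold $\tau_0$ beyond which the $O(1/\tau)$ perturbation has been absorbed (say, is less than $c/2$).

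The non-vanishing itself is the heart of the matter and the step I expect to be the main obstacle. Here I would use the harmonicity of $P$ crucially, exactly as in \cite{BPS, PSV}: if the integral vanished, one derives a contradiction by exploiting that $e^{(\xi+\rmi\eta)\cdot y}$ has the same complex exponential form as the exponential appearing in the recursive structure of the problem — concretely, one integrates by parts repeatedly against the operator $(\xi+\rmi\eta)\cdot\nabla$, using that $(\xi+\rmi\eta)\cdot(\xi+\rmi\eta)=\abs{\xi}^2-\abs{\eta}^2+2\rmi\,\xi\cdot\eta=0$ so that $e^{(\xi+\rmi\eta)\cdot y}$ is itself harmonic, and that the boundary terms over $\partial\cone$ either vanish (on the faces where $(\xi+\rmi\eta)$ is tangential, by a suitable choice of $\eta$) or can be controlled inductively on the lower-dimensional faces. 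This reduces the statement for degree $N$ to a statement about the leading coefficient of $P$ and an explicit one-dimensional Laplace-type integral over a ray or an interval, which is manifestly nonzero. The case distinction between the $2$D oblique cone and the $3$D orthant enters only in bookkeeping the faces of $\partial\cone$; in the orthant case one chooses $\eta$ parallel to a coordinate axis so that most face contributions drop out, while in $2$D the cone has just two edges and the computation is the one already carried out in \cite{BPS}. Putting the pieces together: non-vanishing on the compact parameter set $\Rightarrow$ uniform lower bound $c\norm{P}$ for the unperturbed integral $\Rightarrow$, after reinstating the $\tau^{-N-n}$ scaling and choosing $\tau_0$ large enough to swallow the perturbation, the bound \eqref{lowerBound}.
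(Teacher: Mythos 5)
Your overall architecture---finite-dimensionality and compactness of $\mathcal P_N\cap\{\norm{P}=1\}$, homogeneity producing the $\tau^{-N-n}$ scaling, and an $O(1/\tau)$ correction turning a null vector $\zeta$ with $\zeta\cdot\zeta=0$ into $\rho(\tau)$ with $\rho\cdot\rho+k^2=0$---is the same as the paper's. But your compactness step has a genuine gap: you take the infimum of $\bigl|\int_\cone e^{(\xi+\rmi\eta)\cdot y}P(y)\,dy\bigr|$ over the \emph{product} of $\{\norm{P}=1\}$ with the full compact set of admissible pairs $(\xi,\eta)$, which requires the integral to be nonzero for \emph{every} admissible direction. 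That is false. In 2D write $P=az^N+b\bar z^N$ with $z=y_1+\rmi y_2$; on the branch $\zeta=\zeta_1(1,\rmi)$ one computes in polar coordinates
\[
\int_\cone e^{\zeta\cdot y}P(y)\,dy=\frac{(N+1)!}{(-\zeta_1)^{N+2}}\Bigl(a\int_{\phi_1}^{\phi_2}e^{-2\rmi\phi}\,d\phi+b\int_{\phi_1}^{\phi_2}e^{-\rmi(2N+2)\phi}\,d\phi\Bigr),
\]
and choosing $(a,b)\neq(0,0)$ to annihilate the bracket gives a nonzero harmonic $P$ whose transform vanishes on that \emph{entire branch} of admissible directions. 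So the infimum over your product set is $0$ and the argument collapses. This is also internally inconsistent with your final step, where you propose to choose $\eta$ ``suitably'' so that boundary terms drop out---you cannot simultaneously quantify over all $(\xi,\eta)$ and choose $\eta$ depending on the situation.

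The repair is the formulation the paper actually uses: one only needs, for each nonzero $P$, the existence of \emph{some} admissible $\zeta$ with nonvanishing transform, i.e.\ a bound $\inf_{\norm{P}=1}\sup_{\zeta}\abs{\mathcal{L}P(\zeta)}\geq c>0$. This follows from a sequential compactness/contradiction argument: a minimizing sequence $P_j$ with $\sup_\zeta\abs{\mathcal{L}P_j(\zeta)}<1/j$ has a subsequence converging to some $P_\infty$ with $\norm{P_\infty}=1$ whose transform vanishes for \emph{all} admissible $\zeta$, contradicting the non-vanishing results of \cite{BPS} and \cite{PSV}. The selected $\zeta$ then depends on $P$, which is consistent with the proposition only asserting the existence of a curve $\rho(\tau)$ for each given $P$. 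Note finally that the paper cites the underlying non-vanishing statement rather than reproving it; your sketched integration-by-parts derivation would have to be carried out in full and, by the counterexample above, can at best yield non-vanishing for some, not all, admissible directions.
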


\begin{proof}
  We identify $\mathcal{P}_N$ with a subset of $\C^m$, where $m =\# \{
  \gamma \in \N^n | \abs{\gamma} = N \} = (N + n - 1)! / (N!(n-1)!)$,
  by mapping $P\in\mathcal{P}_N$ to the point corresponding to its
  coefficients listed in some fixed order (e.g. by the lexical order
  of the multi-indices $\gamma$). This induces a topology on
  $\mathcal{P}_N$ which makes it a complete metric space. The space
  $\mathcal{P}_N \cap \{\norm{P}=1\}$ is compact.
  
  We will first consider the easier case of a complex vector
  satisfying $\zeta\cdot\zeta=0$ instead of
  $\rho\cdot\rho+k^2=0$. Write $\delta_0 = \cos (\alpha_m + \alpha_d)$
  and set
  \[
  R_{\cone, \delta_0} = \{ \zeta \in \C^n | \zeta \cdot \zeta = 0,
  \abs{\Re\zeta}=1, \Re\zeta\cdot{x} \leq - \delta_0 \abs{\Re\zeta}
  \abs{x} \forall x \in \cone \}.
  \]
  Also, write $\mathcal{L}P(\zeta) = \int_\cone \exp(\zeta\cdot{x})
  P(x) dx$ for $P \in \mathcal{P}_N$ and $\zeta \in R_{\cone,
  \delta_0}$. We claim first that
  \begin{equation}
    \inf_{P \in \mathcal{P}_N} \sup_{\zeta \in R_{\cone, \delta_0}}
    \abs{\mathcal{L}P(\zeta)} = c \norm{P} \label{infsup}
  \end{equation}
  for some constant $c = c (N, \cone, \delta_0) > 0$. By dividing $P$
  with $\norm{P}$ and the linearity of $\mathcal{L}$ we may assume
  that $\norm{P} = 1$. If \eqref{infsup} did not hold then for any $j
  \in \N$ there is $P_j \in \mathcal{P}_N$, $\norm{P_j}=1$ such that
  $\abs{\mathcal{L}P_j(\zeta)}<j^{- 1}$ for any $\zeta \in R_{\cone,
    \delta_0}$. Since $\mathcal{P}_N \cap \{\norm{P}=1\}$ is compact
  there is $P_{\infty} \in \mathcal{P}_N$, $\norm{P_{\infty}} = 1$ and
  a subsequence $P_{j_{\ell}} \rightarrow P_{\infty}$. Let $\zeta \in
  R_{\cone, \delta_0}$.  It is easily seen that
  $\abs{\mathcal{L}(P_{j_{\ell}} - P_{\infty})(\zeta)} \leq
  (N+n-1)!\delta_0^{1-N-n} \norm{P_{j_{\ell}}-P_{\infty}} \rightarrow
  0$ as $\ell \rightarrow \infty$. Hence
  $\abs{\mathcal{L}P_{\infty}(\zeta)}=0$ for any complex vector $\zeta
  \in R_{\cone, \delta_0}$, but this contradicts the Laplace transform
  lower bounds from \cite{BPS} and \cite{PSV}. Thus the lower bound
  (\ref{infsup}) holds, but for vectors satisfying
  $\zeta\cdot\zeta=0$.
  
  Let us build $\rho(\tau)$ by using a $\zeta$ from the previous
  paragraph. Let $P \in \mathcal{P}_N$ be arbitrary and take $\zeta
  \in R_{\cone,\delta_0}$ such that $\abs{\mathcal{L}P(\zeta)} \geq
  c\norm{P}/2$. For $\tau > 0$ set
  \[
  \rho(\tau) = \tau \Re\zeta + i \sqrt{\tau^2 + k^2} \Im\zeta.
  \]
  Then $\rho(\tau)/\tau \rightarrow \zeta$ as $\tau \rightarrow
  \infty$ and moreover $\rho(\tau)\cdot\rho(\tau)+k^2 = 0$, and
  $\Re\rho(\tau)\cdot{x}\leq -\delta_0\abs{\Re\rho(\tau)}\abs{x}$ for
  $x\in\cone$.  When $\tau$ is large enough we will have
  $\abs{\mathcal{L}P(\rho(\tau)/\tau)} \geq c\norm{P}/4$. The proof is
  as follows: set
  \[
  f(r) = \exp((\Re\zeta + ir\Im\zeta)\cdot{x}).
  \]
  Then $f(1) = \exp(\zeta\cdot{x})$ and
  $f\left(\sqrt{1+k^2/\tau^2}\right) = \exp(\rho(\tau)\cdot{x}/\tau)$.
  By the mean value theorem
  \[
  \abs{f(1) - f\left(\sqrt{1+k^2/\tau^2}\right)} \leq
  \sup_{1<r<\sqrt{1+k^2/\tau^2}} \abs{f'(r)}
  \abs{\sqrt{1+k^2/\tau^2}-1}.
  \]
  But note that $\sqrt{1+k^2/\tau^2}-1 = \tau^{-1}k^2/\left( \tau +
  \sqrt{\tau^2+k^2} \right) \leq k/\tau$. Also $f'(r) =
  i\Im\zeta\cdot{x} f(r)$ and since $\abs{\Re\zeta} = \abs{\Im\zeta} =
  1$ we get $\abs{f'(r)} \leq \abs{x} \exp(-\delta_0 \abs{x})$. In
  other words
  \[
  \abs{f(1)-f\left(\sqrt{1+k^2/\tau^2}\right)} \leq \frac{k}{\tau}
  \abs{x} e^{-\delta_0\abs{x}}.
  \]
  Finally we see the claim:
  \begin{align*}
    \Big\lvert \mathcal{L}P(\zeta) &- \mathcal{L}P
      \left(\frac{\rho(\tau)}{\tau}\right) \Big\rvert = \abs{\int_{\cone}
      \left(f(1)-f\left(\sqrt{1+k^2/\tau^2}\right)\right) P(x) dx}\\ &
    \leq \frac{k}{\tau} \int_\cone e^{-\delta_0\abs{x}} \abs{x}
    \abs{P(x)} dx = \norm{P} \frac{k}{\tau} \int_0^{\infty}
    e^{-\delta_0 r} r^{1+N+n-1} dr\\ & = (N+n)!  \delta_0^{-N-n} k
    \tau^{-1} \norm{P},
  \end{align*}
  and so $\abs{\mathcal{L}P(\rho(\tau)/\tau)} > c\norm{P}/4$ if
  $\tau>4(N+n)! \delta_0^{-N-n} k/c$. A change of variables gives then
  $\mathcal{L}P(\rho(\tau)/\tau) = \tau^{N+n} \mathcal{L}P(\rho
  (\tau))$ and so the proposition is proven.
\end{proof}

\section{Bound for far-field pattern with incident Herglotz wave} \label{sect:ffbound}

\begin{proof}[Proof of Theorem \ref{lowerBoundThm}]
  Let $\mathcal{S}=\mathcal{S}(V,k)$ be such that
  $\norm{u^s}_{H^2(B_{2R})} \leq \mathcal{S}$ whenever the incident
  wave is a normalized Herglotz wave. Let $u^i$ be a normalized
  incident wave and $u^s$ the corresponding scattered wave. Let $u^i$
  be of order $N\in\N$ at the vertex $x_c$, which we may take as being
  the origin, and on which $\varphi\neq0$. Moreover let $P_N$ be its
  $N$-th degree homogeneous Taylor polynomial at $\bar0$. Note that
  this polynomial is harmonic because $(\Delta+k^2)u^i=0$. Firstly
  combine \eqref{lowerBound}, \eqref{upperBound} and
  \eqref{boundaryBound} to get
  \[
  c\norm{P_N} \leq \abs{\Re\rho(\tau)}^{-\min(1,\alpha,\beta)} +
  \frac{\abs{\Re\rho(\tau)}^{N+n+3}}{\sqrt{\ln \ln
      \frac{\mathcal{S}}{\norm{u^s_{\infty}}_{L^2(\mathbb{S}^{n-1})}}}}
  \]
  when $\norm{u^s_{\infty}} \leq \varepsilon_m$ and $\tau \geq \tau_0$,
  with constants depending on $V, N, n, k, \alpha_m + \alpha_d,
  \mathcal{S}$.

  The estimate above depends monotonically on each individual
  constant. Fix $\mathcal{N}\in\N$ and set
  \[
  \varepsilon_{m, \mathcal{N}} = \min_{N \leq \mathcal{N}}
  \varepsilon_m, \quad \tau_{0, \mathcal{N}} = \max_{N \leq
    \mathcal{N}} \tau_0, \quad c_{\mathcal{N}} = \min_{N \leq
    \mathcal{N}} c.
  \]
  Then if $\mathcal{N}\geq N$ the estimate holds with these new
  constants and $\mathcal{N}$ in the exponent instead of $N$ (since
  $\abs{\Re\rho(\tau)} = \tau \geq 1$). In other words
  \begin{equation} \label{toOptimize}
    c_{\mathcal{N}}\norm{P_N} \leq \abs{\Re\rho(\tau)}^{-\min(1,
      \alpha, \beta)} + \frac{\abs{\Re\rho(\tau)}^{\mathcal{N}+ n +
        3}}{\sqrt{\ln \ln \frac{\mathcal{S}}{\norm{u^s_{\infty}}_{L^2
            (\mathbb{S}^{n-1})}}}}
  \end{equation}
  when $\norm{u^s_{\infty}} \leq \varepsilon_{m, \mathcal{N}}$ and
  $\tau \geq \tau_{0,\mathcal{N}}$ and $u^i$ is of order $N \leq
  \mathcal{N}$ at $\bar{0}$.
  
  Write $\gamma = \min (1, \alpha, \beta)$ and $R = \sqrt{
    \ln\ln(\mathcal{S}/\norm{u^s_{\infty}
    }_{L^2(\mathbb{S}^{n-1})})}$. The right-hand side of
  \eqref{toOptimize} has a global minimum at the point
  \[
  \tau_m = (\gamma R/(\mathcal{N}+n+3))^{1/(\mathcal{N}+n+3+\gamma)},
  \]
  and the minimal value there is given by $c(\mathcal{N},n,\gamma)
  R^{-\gamma/(\mathcal{N}+n+3+\gamma)}$. Hence if $\tau_m \geq
  \tau_{0, \mathcal{N}}$, we may set $\tau=\tau_m$ in
  \eqref{toOptimize} and solve for the norm of the far-field
  pattern. We then have
  \begin{equation} \label{farfieldBound1}
    \norm{u^s_{\infty}}_{L^2(\mathbb S^{n-1})} \geq \frac{\mathcal{S}}{\exp \exp \big(c
      \norm{P_N}^{-\ell}\big)}
  \end{equation}
  where the exponent $\ell \geq 2 (\mathcal{N}+n+4)$ and $c < \infty$
  may be chosen to depend only on $V, n, k, \mathcal{N}$. The other
  case, namely $\tau_m < \tau_{0,\mathcal{N}}$ reduces to
  $\norm{u^s_{\infty}}_{L^2(\mathbb S^{n-1})} > \mathcal{S} / (\exp \exp c)$ for some $c = c
  (V, n, k, \mathcal{N})$.
\end{proof}

\section{Vanishing of the interior transmission eigenfunction at corners} \label{sect:vanishing}

\begin{proof}[Proof of Theorem \ref{vanishingThm}]
  Let us start by taking a sequence of incident Herglotz waves
  \[
  v_j(x) = \int_{\mathbb S^{n-1}} \exp(i k \theta\cdot{x}) g_j(\theta)
  d\sigma(\theta)
  \]
  approximating the interior transmission eigenfunction $v$ in the
  $L^2(\polygon)$-norm; see Theorem~\ref{thm:herg1}. We may assume for example that
  $\norm{v-v_j}_{L^2(\polygon)} < 2^{-j}$. By Proposition
  \ref{prop:smallFF} we have the estimate
  \begin{equation} \label{FF20}
    \norm{ v^s_{j\infty} }_{L^2(\mathbb S^{n-1})} < C_{V,k} 2^{-j}
  \end{equation}
  for the corresponding far-field pattern.  The assumption on $v$
  allows us to have $\norm{g_j}_{L^2(\mathbb S^{n-1})} \leq G <
  \infty$ for all $j$.

  Let $x_c\in\partial\polygon$ be a vertex such that
  $\varphi(x_c)\neq0$. Our goal is to estimate the integral of
  $\abs{v}$ in $B(x_c,r) \cap \polygon$. We will achieve that by
  estimating the corresponding integrals of $v_j$. Let us denote $B =
  B(x_c,r)$ for convenience. Let $N_j$ be the order of $v_j$ at $x_c$,
  so $\partial^\alpha v_j (x_c) = 0$ for $\abs{\alpha} < N_j$. Then by
  the smoothness of $v_j$ we have $N_j \in \N \cup \{\infty\}$. By its
  real-analyticity we have $N_j<\infty$. Fix $N \in \N$. If $N_j\geq
  N$, then
  \[
  \norm{v}_{L^1 (B\cap\polygon)} \leq
  \norm{v-v_j}_{L^1(B\cap\polygon)} + \norm{v_j}_{L^1(B)} \leq
  C_{\polygon} 2^{-j} + C_{N,v_j} r^{N+n}.
  \]
  The theorem would follow if $N_j \geq 1$ for an inifinite sequence
  of $j$'s and $\sup_j C_{N,v_j} < \infty$ for these.

  Let us study $\norm{v_j}_{L^1}$ in more detail. Again, assuming
  $N_j\geq N$, by Taylor's theorem
  \[
  v_j(x) = \sum_{\abs{\alpha}=N} \frac{\partial^{\alpha}
    v_j(x_c)}{\alpha!} (x-x_c)^{\alpha} + R_{v_j,N,x_c}(x).
  \]
  Set $P_{j,N}(x) = \sum_{\abs{\alpha}=N} \partial^{\alpha} v_j(x_c)
  x^{\alpha} / \alpha!$, and so $v_j(x) = P_{j,N}(x-x_c) +
  R_{v_j,N,x_c}(x)$. Define $\norm{P_{j,N}} = \int_{\mathbb S^{n-1}}
  \abs{P_{j,N}(\theta)} d\sigma(\theta)$. Then
  \[
  \norm{P_{j,N}(\cdot-x_c)}_{L^1(B)} = \frac{\norm{P_{j,N}}}{N+n}
  r^{N+n}
  \]
  and
  \begin{align*}
    \lvert R_{v_j,N,x_c}&(x)\rvert \leq \sum_{\abs{\beta}=N+1}
    \frac{\abs{x-x_c}^{N+1}}{\beta!} \max_{\abs{\gamma}=N+1}
    \max_{\abs{y-x_c}\leq1} \abs{\partial^{\gamma} v_j(y)}\\ & \leq
    C_{N,n} \abs{x-x_c}^{N+1} \max_{\abs{\gamma}=N+1}
    \max_{\abs{y-x_c}\leq1} \int_{\mathbb S^{n-1}} k^{N+1}
    \abs{\theta^{\gamma}} \abs{g_j(\theta)} d\sigma(\theta)\\ & \leq
    C_{N,k,n} \abs{x-x_c}^{N+1} \norm{g_j}_{L^2(\mathbb S^{n-1})}.
  \end{align*}
  In other words $\norm{v_j}_{L^1(B)} \leq C_{N,k,n,G} (\norm{P_{j,N}}
  + r) r^{N+n}$ if $v_j$ has order $N_j \geq N$ at $x_c$ since we had
  assumed the uniform bound $\norm{g_j}_{L^2(\mathbb S^{n-1})} \leq
  G$. Thus
  \begin{equation} \label{vOrderBound}
    \norm{v}_{L^1(B\cap\polygon)} \leq C_{\polygon} 2^{-j} +
    C_{N,k,n,G} (\norm{P_{j,N}}+r) r^{N+n}
  \end{equation}
  whenever $N_j \geq N$.
  
  Fix $N=1$ now. At least one of the following is true: 1) there is a
  subsequence of $v_j$ for which $N_j \geq 1$, or 2) there is a
  subsequence for which $N_j = 0$. In the former case we note that
  $\norm{P_{j,1}} \leq C_{n,k,G} < \infty$ by the Herglotz wave
  formula for $v_j$, and thus \eqref{vOrderBound} implies that $v$ has
  order $1$ at $x_c$; a stronger result than in the theorem. So
  consider case 2) from now on.

  We may assume that $N_j = 0$ for all $j$ since we are in case 2). We
  will use Theorem \ref{lowerBoundThm}. To use \eqref{FFlowerBound} we
  need to have normalized incident Herglotz waves, a property which is
  not necessarily true for $v_j$. However note that $v_j /
  \norm{g_j}_{L^2(\mathbb S^{n-1})}$ is normalized. We have
  \[
  \norm{v_j}_{L^2(\polygon)} \geq \norm{v}_{L^2(\polygon)} -
  \norm{v-v_j}_{L^2(\polygon)} > 1 - 2^{-j}
  \]
  and
  \begin{align*}
    \norm{v_j}_{L^2(\polygon)} &\leq \int_{\mathbb S^{n-1}}
    \norm{e^{ik\theta\cdot x}}_{L^2(\polygon,x)} \abs{g_j(\theta)}
    d\sigma(\theta) \\ &\leq \sqrt{m(\polygon) \sigma(\mathbb
      S^{n-1})} \norm{g_j}_{L^2(\mathbb S^{n-1})}.
  \end{align*}
  In other words $\norm{g_j}_{L^2(\mathbb S^{n-1})} \geq 1 /
  \left(2\sqrt{m(\polygon) \sigma(\mathbb S^{n-1})} \right) > 0$ when
  $j\geq1$. We also know that $v_j$ has order $0$ at $x_c$. Hence by
  Theorem \ref{lowerBoundThm}
  \[
  \norm{v^s_{j \infty}} \geq \frac{\mathcal{S} \norm{g_j}_{L^2(\mathbb
      S^{n-1})}}{\exp \exp c \min (1,
    \frac{\norm{P_{j,0}}}{\norm{g_j}_{L^2(\mathbb S^{n-1})}})^{-\ell}}
  \geq \frac{\mathcal{S}/ \left(2 \sqrt{m(\polygon) \sigma(\mathbb
      S^{n-1})} \right)}{\exp \exp c \min(1,
    \frac{\norm{P_{j,0}}}{G})^{-\ell}}
  \]
  for all $j$. By (\ref{FF20}) and the above we see that
  $\norm{P_{j,0}} \rightarrow 0$ as $j \rightarrow \infty$.

  By having $N=0$ in (\ref{vOrderBound}) and taking the limit
  $j\rightarrow\infty$ we see that $\norm{v}_{L^1(B)} \leq C_{k,n,G}
  r^{n+1}$. Hence
  \[
  \lim_{r\rightarrow0} \frac{1}{m(B)} \int_{B} \abs{v(x)} dx = 0.
  \]
\end{proof}

\section{Discussion}\label{sect:discussion}

In this paper, we are concerned with the transmission eigenvalue
problem, a type of non elliptic and non self-adjoint eigenvalue
problem. We derive intrinsic properties of transmission eigenfunctions
by showing that they vanish near corners at the support of the
potential function involved. This is proved by an indirect approach,
connecting to the wave scattering theory. Indeed, we first show that
by using the Herglotz-approximation of a transmission eigenfunction as
an incident wave field, the generated scattered wave can have an
arbitrarily small energy in its far-field pattern. On the other hand,
we establish that with an incident Herglotz wave the scattered
far-field pattern has a positive lower bound depending on the Herglotz
wave's order of vanishing at a corner. This hints that the
transmission eigenfunction should vanish near the corner
point. Nevertheless, the rigorous justification of the vanishing
property is a highly nontrivial procedure.

To our best knowledge, Theorem~\ref{vanishingThm} is the first result
in the literature on the intrinsic properties of transmission
eigenfunctions. The vanishing behaviour obviously carries geometric
information of the support of the involved potential function
$V$. Indeed, in inverse scattering theory, an important problem
arising in practical application is to infer knowledge of $V$ by
measurements of the far-field pattern
$u^s_\infty\left(\frac{x}{\abs{x}}; u^i\right)$
(cf. \cite{CK,Isa,Nachman88,Sylvester--Uhlmann,Uhl2,Uhl}). There is
relevant study on determining the transmission eigenvalues using
knowledge of $u^s_\infty\left(\frac{x}{\abs{x}}; u^i\right)$
(cf. \cite{CH2013inBook}). Clearly, it would be interesting and useful
as well to determine the corresponding eigenfunctions from the inverse
scattering point of view. Indeed, as suggested by
Theorem~\ref{vanishingThm}, if the unknown function $V$ is supported
in a convex polyhedral domain, then one might use the vanishing
property of the corresponding transmission eigenfunction to determine
the vertices of the polyhedral support of $V$. As mentioned earlier,
in the upcoming numerical paper \cite{BLLW}, we shall show that the
vanishing order is related to the angle of the corner and the
vanishing behaviour also occurs at the edge singularities of
$\mathrm{supp}(V)$. Hence, one can use these intrinsic properties of
transmission eigenfunctions to determine the polyhedral support of an
unknown function $V$. This is beyond the aim and scope of the present
article and we shall investigate this interesting issue in our
upcoming papers.

We will comment on the requirement of uniformly bounded Herglotz
kernels of Theorem~\ref{vanishingThm}. It is a technical condition and
very difficult to relate directly to Theorem~\ref{thm:herg1}. This
study is a first step in the research of intrinsic properties of
transmission eigenfunctions and we have brought a new phenomenon into
attention. This observation was derived from the apparent
contradiction of the well-known Theorem~\ref{thm:herg1} and our new
Theorem~\ref{lowerBoundThm}. In addition, the upcoming numerical study
\cite{BLLW} gives evidence that this vanishing phenomenon is true more
generally. Also in another upcoming paper (Proposition 3.5 in
\cite{BL2017}) we study corner scattering with more general incident
waves, namely waves in $H^2$ that do not need to be defined outside a
small interior neighbourhood of a corner of $\polygon$. That result
suggests that the condition of approximation by uniformly bounded
kernels can be swapped out for the condition that $v$ restricted to
$\polygon \cap B(x_c,\varepsilon)$ is in $H^2$. In other words, if a
transmission eigenfunction is smooth enough near a corner, then it
must vanish at that corner. We shall further explore this interesting
issue in forthcoming papers.

Finally, we would like to mention that Theorem~\ref{lowerBoundThm} is
of significant interest for its own sake, particularly for
invisibility cloaking (cf. \cite{GKLU4,GKLU5}). Indeed, it generalises
our earlier corner scattering result in \cite{BL2016} where the
incident wave fields are confined to be plane waves. It suggests that
if the support of the underlying scatterer possesses corner
singularities, then in principle for any incident fields, invisibility
cannot be achieved. On the other hand, it also suggests that if one
intends to diminish the scattering effect, then the incident wave
field should be such chosen that it vanishes to a high order at the
corner point. This is another interesting topic worth of further
investigation, especially the corresponding extension to anisotropic
scatterers.

\section*{Acknowledgement}

We are grateful to Professor Fioralba Cakoni for helpful discussion on Proposition~\ref{prop:smallFF}
which inspires this article. The work of H Liu was supported by the
FRG fund from Hong Kong Baptist University, the Hong Kong RGC grant
(No.\, 12302415) and NSF of China (No.\,11371115).

\addcontentsline{toc}{section}{Bibliography}

\end{document}